\documentclass[11pt]{amsart}

\usepackage{enumerate}
\usepackage{amsmath,amsthm,verbatim,amssymb,amsfonts,amscd,amsopn,amsxtra,graphicx,lmodern,enumitem}
\usepackage{hyperref}
\usepackage{tikz-cd} 
\usepackage{graphics}
\graphicspath{ {./images/} }
\usepackage{mathrsfs}
\usepackage{relsize}
\usepackage{enumitem}
\usepackage[utf8]{inputenc}
\usepackage{csquotes}
\usepackage{amsthm}
\usepackage{thmtools}
\usepackage{mathtools}
\usepackage{microtype}
\usepackage{pdfrender,xcolor}

\tikzset{
	symbol/.style={
		draw=none,
		every to/.append style={
			edge node={node [sloped, allow upside down, auto=false]{$#1$}}}
	}
}

\newlist{condenum}{enumerate}{1} 
\setlist[condenum]{label=\bfseries C\arabic*., 
	ref=\arabic*, wide}

\begin{document}
	\pdfrender{StrokeColor=black,TextRenderingMode=2,LineWidth=0.2pt}	
	
	\title{Common extensions of valuations to rational function fields}

	\author{Arpan Dutta}
	\address{Department of Mathematics, School of Basic Sciences, IIT Bhubaneswar, Argul,
		Odisha, India, 752051.}
	\email{arpandutta@iitbbs.ac.in}
	
	\author{Wael Mahboub}
	\address{Department of Mathematics and Computer Science, Lebanese American University, Beirut Campus, PO Box: 13-5053, Chouran, Beirut 1102-2801, Lebanon.}
	\email{wael.mahboub@lau.edu.lb}

	\def\NZQ{\mathbb}               
	\def\NN{{\NZQ N}}
	\def\QQ{{\NZQ Q}}
	\def\ZZ{{\NZQ Z}}
	\def\RR{{\NZQ R}}
	\def\CC{{\NZQ C}}
	\def\AA{{\NZQ A}}
	\def\BB{{\NZQ B}}
	\def\PP{{\NZQ P}}
	\def\FF{{\NZQ F}}
	\def\GG{{\NZQ G}}
	\def\HH{{\NZQ H}}
	\def\UU{{\NZQ U}}
	\def\P{\mathcal P}
	
	%
	%
	\let\union=\cup
	\let\sect=\cap
	\let\dirsum=\oplus
	\let\tensor=\otimes
	\let\iso=\cong
	\let\Union=\bigcup
	\let\Sect=\bigcap
	\let\Dirsum=\bigoplus
	\let\Tensor=\bigotimes
	
	\theoremstyle{plain}
	\newtheorem{Theorem}{Theorem}[section]
	\newtheorem{Lemma}[Theorem]{Lemma}
	\newtheorem{Corollary}[Theorem]{Corollary}
	\newtheorem{Proposition}[Theorem]{Proposition}
	\newtheorem{Problem}[Theorem]{}
	\newtheorem{Conjecture}[Theorem]{Conjecture}
	\newtheorem{Question}[Theorem]{Question}
	
	\theoremstyle{definition}
	\newtheorem{Example}[Theorem]{Example}
	\newtheorem{Examples}[Theorem]{Examples}
	\newtheorem{Definition}[Theorem]{Definition}
	
	\theoremstyle{remark}
	\newtheorem{Remark}[Theorem]{Remark}
	\newtheorem{Remarks}[Theorem]{Remarks}

	\newcommand{\n}{\par\noindent}
	\newcommand{\nn}{\par\vskip2pt\noindent}
	\newcommand{\sn}{\par\smallskip\noindent}
	\newcommand{\mn}{\par\medskip\noindent}
	\newcommand{\bn}{\par\bigskip\noindent}
	\newcommand{\pars}{\par\smallskip}
	\newcommand{\parm}{\par\medskip}
	\newcommand{\parb}{\par\bigskip}

	\let\epsilon=\varepsilon
	\let\phi=\varphi
	\let\kappa=\varkappa
	
	\newcommand{\trdeg}{\mbox{\rm trdeg}\,}
	\newcommand{\rr}{\mbox{\rm rat rk}\,}
	\newcommand{\sep}{\mathrm{sep}}
	\newcommand{\ac}{\mathrm{ac}}
	\newcommand{\ins}{\mathrm{ins}}
	\newcommand{\res}{\mathrm{res}}
	\newcommand{\Gal}{\mathrm{Gal}\,}
	\newcommand{\ch}{\operatorname{char}}
	\newcommand{\Aut}{\mathrm{Aut}\,}
	\newcommand{\kras}{\mathrm{kras}\,}
	\newcommand{\dist}{\mathrm{dist}\,}
	\newcommand{\ord}{\mathrm{ord}\,}
	\newcommand{\Div}{\mathrm{Div}\,}
	\newcommand{\Supp}{\mathrm{Supp}\,}
	\newcommand{\Spec}{\mathrm{Spec}\,}
	\newcommand{\height}{\mathrm{ht}\,}
	\newcommand{\rk}{\mathrm{rk}\,}
	\newcommand{\Diff}{\mathrm{Diff}\,}
	\newcommand{\Ram}{\mathrm{Ram}\,}
	\newcommand{\id}{\mathrm{id}\,}
	\newcommand{\lex}{\mathrm{lex}\,}

	\let\phi=\varphi
	\let\kappa=\varkappa
	
	\def \a {\alpha}
	\def \b {\beta}
	\def \s {\sigma}
	\def \d {\delta}
	\def \g {\gamma}
	\def \o {\omega}
	\def \l {\lambda}
	\def \th {\theta}
	\def \D {\Delta}
	\def \G {\Gamma}
	\def \O {\Omega}
	\def \L {\Lambda}
	%
	%
	\textwidth=15cm \textheight=22cm \topmargin=0.5cm
	\oddsidemargin=0.5cm \evensidemargin=0.5cm \pagestyle{plain}


	\date{\today}
	
	\maketitle


\begin{abstract}
	Let \( (K(X)|K,w) \) be a valuation transcendental extension of rational function fields and take a minimal pair of definition \((a,\g)\). In this paper, we characterize those \(K\)-conjugates \(a'\) of \(a\) such that the monomial valuation induced by the pair \((a',\g)\) restricts to \(w\) on \(K(X)\). In particular, we show that \(a'\) satisfies this if and only if \(a'\) and \(a\) are conjugates over the henselization of \((K,v)\). 
	
	The second part of the paper concerns abstract key polynomials. We introduce the notion of a regular limit key polynomial, and more generally, that of a regular complete sequence of key polynomials. We prove that regularity is equivalent to the property that every root of every key polynomial determines the corresponding truncated valuation. This extends earlier work of Mahboub, Mansour and Spivakovsky by allowing both limit key polynomials and valuation algebraic extensions. As a consequence, we obtain that \(w\) always admits a regular complete sequence of key polynomials whenever \((K,v)\) is dense in its henselization.    
\end{abstract}


\section{Introduction}
Let $(K,v)$ be a valued field, $\overline{K}$ a fixed algebraic closure of $K$ and $\overline{v}$ an extension of $v$ to $\overline{K}$. Let \(X\) be transcendental over \(K\). The study of extensions of \(v\) to the rational function field \(K(X)\) plays a central role in valuation theory and is closely connected with fundamental problems in algebraic geometry, most notably local uniformization. Two of the principal tools in the investigation of such extensions are the theories of minimal pairs of definition and of key polynomials.

\pars Minimal pairs of definition were introduced in the study of residue-transcendental extensions of valuations to rational function fields. Such extensions (and more generally valuation transcendental extensions) of \(\overline{v}\) to \(\overline{K}(X)\) are given by \textit{monomial} valuations, that is, they are given by Taylor expansion with respect to \(X-a\), where \(a\in\overline{K}\). If the value assigned to \(X-a\) is \(\g\), we denote the corresponding valuation by \(\overline{v}_{a,\g}\). The pair \((a,\g)\) is then called a \textbf{pair of definition} for \(\overline{v}_{a,\g}\). In general, an extension \(\overline{v}_{a,\g}\) may admit several pairs of definition. Alexandru, Popescu and Zaharescu considered the \textit{minimal} objects among them, that is, minimal with respect to the degree of the center \(a\) over \(K\), and showed that they encode significant information regarding the restriction of \(\overline{v}_{a,\g}\) to \(K(X)\). Their work initiated a systematic study of minimal pairs of definition and their applications to extensions of valuations to rational function fields (see \cite{AlexandruPopescuZaharescu1988}, \cite{AlexandruPopescuZaharescu1990a}, \cite{AlexandruPopescuZaharescu1990b}, \cite{AlexandruZaharescu1988}, \cite{Kuhlmann2004BadPlaces}, \cite{Dutta2021}). 

\pars A natural question is the following: Assume that \(w\) is a valuation transcendental extension of \(v\) to \(K(X)\), and let \(\overline{v}_{a,\g}\) be a common extension of \(w\) and \(\overline{v}\) to \(\overline{K}(X)\). Which \(K\)-conjugates \(a'\) of \(a\) satisfy 
\[ \overline{v}_{a',\g}|_{K(X)} = w?  \]
Equivalently, one wants to understand which conjugates of the center \(a\) define the same valuation after restriction to \(K(X)\). This question is substantially more delicate than its analogue over \(\overline{K}(X)\). Indeed, two valuations \(\overline{v}_{a,\g}\) and \(\overline{v}_{a',\g}\) may be distinct on \(\overline{K}(X)\), yet induce the same restriction to \(K(X)\). The following example illustrates this phenomenon.

\begin{Example}
	Let $(K,v)$ denote the valued field $k(t)$ equipped with the $t$-adic valuation. Take an extension $\overline{v}$ of $v$ to $\overline{K}$. Take a real number $\g>\frac{1}{2}$. Then $\overline{v}_{\sqrt{t},\g} \neq \overline{v}_{-\sqrt{t},\g}$. However, for any $z\in K$ we have $\overline{v}_{\sqrt{t},\g}(\sqrt{t}-z) = \min \{ \frac{1}{2},vz \} = \overline{v}_{-\sqrt{t},\g}(-\sqrt{t}-z)$. It now follows from \cite[Theorem 2.2]{AlexandruPopescuZaharescu1990a} that $\overline{v}_{\sqrt{t},\g}|_{K(X)} = \overline{v}_{-\sqrt{t},\g}|_{K(X)}$.
\end{Example}

\pars The first goal of this paper is to answer this question using ramification theoretic techniques. We denote by 
\[ G:= \Gal(\overline{K}|K) = \Gal(\overline{K}(X)|K(X)), \] 
and by \(G^d\) the absolute decomposition group 
\[ G^d:= \left\{ \s\in G \mid \overline{v} \s a = \overline{v} a \text{ for all } a \in \overline{K} \right\}. \] 
We prove that the following are equivalent:
\begin{enumerate}[label=(\roman*)]
	\item \( \overline{v}_{a,\g}|_{K(X)} = \overline{v}_{b,\g}|_{K(X)} \),
	\item \( \overline{v}_{a,\g} = \overline{v}_{\s b, \g} \text{ for some } \s\in G^d \).
\end{enumerate}
In other words, equality after the restriction to \(K(X)\) is controlled by the decomposition group \(G^d\). Using different techniques, this result was also proved in \cite[Proposition 3.12]{PeruginelliNartNovacoski2025}.

\pars After fixing an extension of \( w \) to \( \overline{K(X)} \), we then relate this description to the \textit{henselization} \(K^h\) of \((K,v)\) and to the \textbf{implicit constant field}
\[ IC_K (w) := \overline{K}\sect K(X)^h.   \]
Our first main result of this paper shows that the conjugates of a minimal center which induce the same valuation are determined precisely by these fields.
 
\begin{Theorem}\label{Thm restriction and IC_K(w) and K^h}
	Let \( (K(X)|K,w) \) be a valuation transcendental extension. Fix an extension of \(w\) to \(\overline{K(X)}\). Take a minimal pair of definition \( (a,\g) \) for \( (K(X)|K,w) \) and let \(a'\) be a \(K\)-conjugate of \(a\). Then,
	\begin{enumerate}[label=(\roman*)]
		\item \( \overline{v}_{a,\g} = \overline{v}_{a',\g} \) if and only if \( a' \) is an \( IC_K(w) \)-conjugate of \(a\),
		\item \( \overline{v}_{a,\g}|_{K(X)} = w = \overline{v}_{a',\g}|_{K(X)} \) if and only if \( a' \) is a \( K^h \)-conjugate of \(a\).
	\end{enumerate}
\end{Theorem}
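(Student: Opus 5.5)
We fix the extension of \(w\) to \(\overline{K(X)}\) and take \(\overline{v}\) on \(\overline{K}\) to be its restriction. Since \((a,\g)\) is a pair of definition, this extension restricted to \(\overline{K}(X)\) is \(\overline{v}_{a,\g}\). We use \(G=\Gal(\overline{K}|K)=\Gal(\overline{K}(X)|K(X))\), which acts on monomial valuations by \(\overline{v}_{a,\g}\circ\s^{-1}=(\overline{v}\circ\s^{-1})_{\s a,\g}\).

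For (i), consider the decomposition group \(D\) of \(\overline{v}_{a,\g}\) in \(\Gal(\overline{K}(X)|K(X))\), i.e.\ the \(\s\) with \(\overline{v}_{a,\g}\circ\s=\overline{v}_{a,\g}\). Its fixed field in \(\overline{K}(X)\) is \(\overline{K}(X)\cap K(X)^h\). We want this to equal \(IC_K(w)(X)\), so that under the identification with \(G\) the group \(D\) is \(\Gal(\overline{K}|IC_K(w))\). The argument: \(\overline{K}(X)\cap K(X)^h\) is algebraic over \(K(X)\) inside \(\overline{K}(X)\), hence of the form \(L(X)\) with \(L=\overline{K}\cap K(X)^h=IC_K(w)\). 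Any \(\s\in D\) restricts to \(\overline{v}\) on \(\overline{K}\), so \(\s\in G^d\) and
\[
\overline{v}_{a,\g}\circ\s^{-1}=\overline{v}_{\s a,\g}.
\]
Thus \(\s\in D\) if and only if \(\overline{v}_{\s a,\g}=\overline{v}_{a,\g}\) and \(\s\in G^d\). There is one subtlety: \(\overline{v}_{\s a,\g}=\overline{v}_{a,\g}\) already forces \(\s\in G^d\)? Not by itself, so we must argue directly. If \(a'=\s a\) with \(\s\in\Gal(\overline{K}|IC_K(w))=D\), then \(\overline{v}_{a',\g}=\overline{v}_{a,\g}\). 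Conversely, suppose \(\overline{v}_{a',\g}=\overline{v}_{a,\g}\), i.e.\ \(\overline{v}(a-a')\ge\g\). We need some \(\tau\in D\) with \(\tau a=a'\). Here minimality enters: every root \(a''\) of the minimal polynomial of \(a\) over \(IC_K(w)\) satisfies \(\overline{v}(a-a'')\ge\g\) by the forward direction. By the counting of the next paragraph, the \(K\)-conjugates within distance \(\g\) of \(a\) are then exactly the \(IC_K(w)\)-conjugates.

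For the counting, we use the known fact (Alexandru–Popescu–Zaharescu, via minimality) that \(\g\) exceeds the Krasner-type constant of \(a\). Every \(K\)-conjugate \(a''\) with \(\overline{v}(a-a'')\ge\g\) then gives the same valuation, so \(\{a'':\overline{v}(a-a'')\ge\g\}\) is the orbit of \(a\) under the stabilizer of \(\overline{v}_{a,\g}\) intersected with \(G^d\). It remains to show that any \(\s\in G\) with \(\overline{v}(\s a-a)\ge\g\) can be modified by an element fixing \(a\) so as to lie in \(G^d\). We would use that \(\overline{v}_{a,\g}\) and \(\overline{v}_{a,\g}\circ\s^{-1}\) both extend \(w\) (taking \(\s\) to be a \(K(X)\)-automorphism), together with conjugacy of extensions: \(\overline{v}\circ\s^{-1}=\overline{v}\circ\rho\) for some \(\rho\in G\), and we then adjust. \textbf{This is the main obstacle}, and we expect it to require the criterion \cite[Theorem 2.2]{AlexandruPopescuZaharescu1990a} comparing values \(\overline{v}(a-z)\) for \(z\in K\).

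For (ii), apply the equivalence stated in the introduction: \(\overline{v}_{a',\g}|_{K(X)}=w\) if and only if \(\overline{v}_{a,\g}=\overline{v}_{\s a',\g}\) for some \(\s\in G^d=\Gal(\overline{K}|K^h)\). Suppose \(a'=\tau a\) with \(\tau\in G^d\). Then \(\s=\tau^{-1}\) works trivially. Conversely, given such \(\s\), part (i) yields \(\s a'=\rho a\) with \(\rho\in\Gal(\overline{K}|IC_K(w))\). Since \(K^h\subseteq IC_K(w)\), we have \(\rho\in G^d\), and hence \(a'=\s^{-1}\rho a\) with \(\s^{-1}\rho\in G^d\). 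So \(a'\) is a \(K^h\)-conjugate of \(a\).
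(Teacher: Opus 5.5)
\textbf{There is a genuine gap: the converse direction of (i) is never proved.} Your forward direction of (i), and your derivation of (ii) from Proposition \ref{Prop restriction iff a, gamma = sigma b, gamma}, part (i) and $K^h\subseteq IC_K(w)$, agree with the paper. The gap is the converse of (i): every $K$-conjugate $a'$ with $\overline{v}(a-a')\ge\g$ must be an $IC_K(w)$-conjugate of $a$. This is the real content of the theorem, and your proof of (ii) depends on it. You leave it open as ``the main obstacle'', and the ingredients you propose do not supply it.

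\emph{The Krasner claim is false.} Minimality does not force $\g>\kras(a,K)$. Take $K$ henselian of characteristic $p$ and $a$ a root of $X^p-X-c$ generating an immediate extension of degree $p$. Then $(a,0)$ is a minimal pair: over any $L=K(b)$ with $[L:K]<p$, the extension $L(a)|L$ is again an immediate Artin--Schreier extension, which forces $\overline{v}(a-b)<0$. Yet every conjugate $a+i$ satisfies $\overline{v}(a-(a+i))=0=\g$. This defect situation, where $j_w(Q)>1$, is exactly where the converse has content. If $\g>\kras(a,K)$ always held, $a$ would be the only distinct conjugate within $\g$ of itself and there would be nothing to prove.

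\emph{The remaining steps carry no information.} Identifying $\{a'':\overline{v}(a-a'')\ge\g\}$ with an orbit under $D$ simply restates the goal. The observation that $\overline{v}_{a,\g}$ and $\overline{v}_{a,\g}\circ\s^{-1}$ both extend $w$ holds for every $\s\in G$. Likewise, ``$\overline{v}\circ\s^{-1}=\overline{v}\circ\rho$ for some $\rho\in G$'' is vacuous (take $\rho=\s^{-1}$) unless $\rho$ is required to fix $a$.

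\textbf{How the paper closes the gap.} It counts. Let $\mathcal{S}$ be the multiset of $K$-conjugates of $a$ within $\g$ of $a$, and $\mathcal{I}$ the multiset of $IC_K(w)$-conjugates of $a$. The forward direction gives $\mathcal{I}\subseteq\mathcal{S}$. By definition $|\mathcal{S}|=j_w(Q)$. Since $K^h\subseteq IC_K(w)\subseteq K(a)^h$, we have $|\mathcal{I}|=[K(a)^h:IC_K(w)]$. These are equal by Proposition \ref{Prop j(Q) = [K(a)^h:IC_K(w)]}, so $\mathcal{I}=\mathcal{S}$.

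\textbf{How your own route could be completed.} The missing input is Theorem \ref{Thm w = v_Q} applied to two minimal pairs. If $a'=\s a$ and $\overline{v}(a-a')\ge\g$, then by Proposition \ref{Prop pair of defn}, $(a',\g)$ is also a minimal pair for the same $\overline{w}$. Theorem \ref{Thm w = v_Q} then gives $\overline{v}f(a)=wf=\overline{v}f(a')$ for all $f\in K[X]$ with $\deg f<\deg Q$. In other words, $\overline{v}\circ\s=\overline{v}$ on $K(a)$. Conjugacy of extensions over $K(a)$ yields $\rho\in\Gal(\overline{K}|K(a))$ with $\tau:=\s\rho^{-1}\in G^d$ and $\tau a=a'$. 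Your own criterion then gives $\tau\in D=\Gal(\overline{K}|IC_K(w))$.

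As written, however, the proposal proves neither the converse of (i) nor, consequently, (ii).
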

This theorem exhibits a striking parallel between equality of valuations on \(\overline{K}(X)\) and equality after restricting to \(K(X)\): the former is controlled by the implicit constant field \(IC_K(w)\), while the latter is controlled by the henselization \(K^h\). 	 

\pars As a consequence, if the extension \( (K(a)|K,\overline{v}) \) is unibranched, we obtain that 
\[ \overline{v}_{a',\g}|_{K(X)} = w  \]
for \textit{every} \(K\)-conjugate \(a'\). In particular, this conclusion holds whenever \(K\) is dense in the henselization \(K^h\). 

\parm The second part of this paper concerns \textit{key polynomials}. The theory of key polynomials was first introduced by MacLane in the case of discrete rank-one valuations \cite{MacLane1936}, and later extended by Vaqui\'{e} \cite{Vaquie2007Extension} to arbitrary valuations. An alternative approach towards key polynomials was first introduced in \cite{HerreraGovantesMahboubOlallaSpivakovsky2022} and subsequently developed in \cite{NovacoskiSpivakovsky2018}. The corresponding objects are often referred to as \textit{abstract} key polynomials in the literature. Throughout this paper, all key polynomials are understood in this sense.    

\pars The connection between key polynomials and minimal pairs was investigated by Mahboub, Mansour and Spivakovsky in \cite{MahboubMansourSpivakovsky2021}. They showed that if \(w\) is a valuation transcendental extension, and if a complete sequence of key polynomials for \(w\) contains no limit key polynomials, then every root of the last key polynomial defines a common extension of \(w\) and \(\overline{v}\) to \(\overline{K}(X)\). This was also proved by Vaqui\'{e} in the language of admissible families associated to a valuation of \(K[X]\) \cite{Vaquie2005}. 

\pars Our goal is to extend this correspondence to arbitrary extensions of valued fields, allowing both valuation algebraic extensions and the presence of limit key polynomials. The natural induction on the sequence of key polynomials breaks down in the presence of limit key polynomials, because the expected compatibility between consecutive key polynomials need no longer hold. The goal of the second part is to identify the precise additional condition needed to recover the correspondence. For this purpose, we introduce the notions of a \textit{regular limit key polynomial} and of a \textit{regular complete sequence of key polynomials} (see Definitions \ref{Defn regular lim KP} and \ref{Defn regular CSKP}). The condition of regularity happens to be both necessary and sufficient as the next theorem shows:

\begin{Theorem}\label{Thm restriction and regularity}
	Let \((K(X)|K,w)\) be an extension of valued fields. Take a complete sequence of key polynomials \( \left\{Q_i\right\}_{i\in\L} \) for \( (K(X)|K,w) \). For each \( i\in\L \), we denote by \(w_i\) the truncated valuation \(w_{Q_i}\). Then the following are equivalent:
	\begin{enumerate}[label=(\roman*)]
		\item \( \left\{Q_i\right\}_{i\in\L} \) is regular, 
		\item for each \(i\in\L\) and each root \(a\) of \(Q_i\), we have
		\[ \overline{v}_{a,\d(Q_i)}|_{K(X)} = w_i.  \]
	\end{enumerate}
\end{Theorem}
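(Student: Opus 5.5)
We argue by transfinite induction along the well-ordered index set \(\L\). The plan uses two standard facts about abstract key polynomials. First, for a key polynomial \(Q\) and any root \(a\) of \(Q\) with \(\d(Q) = \overline{v}(X-a)\) maximal among the roots, the restriction \(\overline{v}_{a,\d(Q)}|_{K[X]}\) agrees with the truncation \(w_Q\) on all polynomials of degree less than \(\deg Q\). Indeed, for such \(f\) one has \(w(f) = \overline{v}_{a,\d(Q)}(f)\), since \(\overline{v}(X-b)\) attains its \(w\)-value for every root \(b\) of \(f\). Second, \(w_Q\) is determined by its values on \(Q\)-expansions \(f = \sum f_j Q^j\) via \(w_Q(f) = \min_j \{w(f_j) + j\,w(Q)\}\). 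Hence (ii) at stage \(i\) reduces to the single requirement
\[ \overline{v}_{a,\d(Q_i)}\Bigl(\sum_j f_j Q_i^j\Bigr) = \min_j \bigl\{\overline{v}_{a,\d(Q_i)}(f_j) + j\,\overline{v}_{a,\d(Q_i)}(Q_i)\bigr\}, \]
together with \(\overline{v}_{a,\d(Q_i)}(Q_i) = w(Q_i)\).

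We then split according to the type of \(i\).

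\emph{Non-limit step.} Let \(i = j+1\), so that \(Q_i\) is obtained from \(Q_j\) by a MacLane-type augmentation. By the induction hypothesis, every root of \(Q_j\) gives \(w_j\). Following Mahboub--Mansour--Spivakovsky, we choose a root \(a\) of \(Q_i\) and compare it with the roots of \(Q_j\). Since \(Q_i\) is \(w_j\)-minimal and \(w_j\)-irreducible, the valuation \(\overline{v}_{a,\d(Q_i)}\) dominates \(\overline{v}_{b,\d(Q_j)}\) for a suitable root \(b\) of \(Q_j\). A computation of \(\overline{v}(Q_i(X))\) via the product over roots then gives the displayed equality, with no extra hypothesis.

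To pass from one root to all roots, we use Theorem \ref{Thm restriction and IC_K(w) and K^h}(ii), or rather its underlying criterion: equality after restriction is controlled by \(G^d\). If \(\overline{v}_{a,\d}|_{K(X)} = w_i\) for one root \(a\), then any other root \(a' = \tau a\) must be handled, and the argument shows that the value \(\overline{v}(Q_i(X))\) computed around \(a'\) coincides with the value computed around \(a\). This is exactly where the obstruction enters. In the non-limit case the value is forced by the degree-\(\deg Q_j\) data, so it is automatic. In the limit case it is not.

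\emph{Limit step.} Let \(i\) be a limit index, so that \(Q_i\) is a limit key polynomial for the family \(\{Q_j\}_{j<i}\). Here the definition of regularity (Definition \ref{Defn regular lim KP}) should be precisely the condition that every root \(a\) of \(Q_i\) satisfies \(\overline{v}(a - b_j) \geq \d(Q_j)\) for suitable roots \(b_j\) of the \(Q_j\), equivalently that the roots of \(Q_i\) lie in the intersection of the "discs" of the previous stages. For (i)\(\Rightarrow\)(ii), regularity lets each \(\overline{v}_{a,\d(Q_i)}\) restrict to \(w_j\) on polynomials of degree less than \(\deg Q_i\) for every \(j<i\), hence to \(w\) there. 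The expansion argument above then gives \(w_i\). For (ii)\(\Rightarrow\)(i), we assume a root \(a\) of \(Q_i\) violates regularity and produce a polynomial \(f\), of degree less than \(\deg Q_i\), or \(Q_i\) itself, on which \(\overline{v}_{a,\d(Q_i)}\) and \(w_i\) differ, using the stabilizing values \(w(Q_j)\) for \(j<i\).

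The main obstacle is this limit step, in particular showing that a single bad root already produces a discrepancy, since the values \(w(Q_j)\) need not stabilize in a way visible to one root. My first attempt would be to use Theorem \ref{Thm restriction and IC_K(w) and K^h} to reduce to conjugates over \(K^h\), and to work in the henselization, where key polynomials factor and the limit behaviour can be read off root by root.
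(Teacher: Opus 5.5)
There is a genuine gap, and it is at the step you flag as the main obstacle. You guess Definition \ref{Defn regular lim KP} wrongly. Regularity of a limit key polynomial $Q_{j+1}$ is not a statement about where its roots lie. It is the algebraic condition that, for all sufficiently large $k\in\vartheta_j$, the $Q_k$-expansion of $Q_{j+1}$ has $q_n=1$ and $w_k(Q_{j+1})=n\,wQ_k$; non-limit steps satisfy the same condition by Remark \ref{Rmk regular non-limit key pols}. By replacing it with ``every root of $Q_{j+1}$ lies within $\delta(Q_k)$ of a root of $Q_k$'' you assume the heart of the theorem. Converting the expansion condition into that root-location statement, and back, is exactly what must be proved.

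The same unproved claim carries your non-limit step (``$\overline{v}_{a,\delta(Q_i)}$ dominates $\overline{v}_{b,\delta(Q_j)}$'' for \emph{every} root $a$). Your passage from one root to all roots is also not an argument. Your first ``standard fact'' concerns only the root realizing $\delta(Q)$. Theorem \ref{Thm restriction and IC_K(w) and K^h} only identifies the good roots as the $K^h$-conjugates of such a root, so you would still have to show that regularity rules out all other roots. Finally, even granting root location, ``restricts to $w_j$ \dots hence to $w$'' only gives $wg\le\overline{v}g(a)$ for $\deg g<\deg Q_{j+1}$. Equality needs more. The paper takes $g$ of minimal degree with strict inequality, writes $Q_{j+1}=hg+r$, and derives $wQ_{j+1}=w_kQ_{j+1}$, which contradicts Proposition \ref{Prop wf = w_Q f} since $\delta_{j+1}>\delta_k$. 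Theorem \ref{Thm common extns APZ} then finishes.

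The missing idea is counting roots with the $j$-invariant.
\begin{enumerate}
\item[(i)$\Rightarrow$(ii).] By Theorem \ref{Thm j(f)/j(Q)}, the regularity condition for $Q_{i'}$ relative to $Q_i$ amounts to $j_{w_i}(Q_{i'})=n\,j_{w_i}(Q_i)$ with $\deg Q_{i'}=n\deg Q_i$. By Theorem \ref{Thm S_f and j_w(f)}, $|\mathcal{S}_{w_i}(f)|=t_i\,j_{w_i}(f)$, and the induction hypothesis gives $t_i\,j_{w_i}(Q_i)=\deg Q_i$. Hence $|\mathcal{S}_{w_i}(Q_{i'})|=\deg Q_{i'}$. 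Since all common extensions of $w_i$ are centred at roots of $Q_i$, every root of $Q_{i'}$ lies within $\delta_i$ of a root of $Q_i$. This is precisely the hypothesis of Proposition \ref{Prop MMS}.
\item[(ii)$\Rightarrow$(i).] Suppose regularity fails at some $i\in\vartheta_j$. Always $j_{w_i}(Q_{j+1})\le n\,j_{w_i}(Q_i)$. Moreover $q_n\neq 1$ forces $n\deg Q_i<\deg Q_{j+1}$, while $w_i(Q_{j+1})<n\,wQ_i$ forces strict inequality in the $j$-invariants. Using (ii) at $i$, this gives $|\mathcal{S}_{w_i}(Q_{j+1})|<\deg Q_{j+1}$. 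So some root $a$ of $Q_{j+1}$ satisfies $\overline{v}(a-b)<\delta_i$ for all roots $b$ of $Q_i$, and then $\overline{v}Q_i(a)<wQ_i$ by \cite[Lemma 6.2]{MahboubMansourSpivakovsky2021}. But (ii) at $j+1$ makes $(a,\delta_{j+1})$ a minimal pair for $w_{j+1}$, so $\overline{v}Q_i(a)=w_{j+1}Q_i=wQ_i$, a contradiction.
\end{enumerate}
So a single bad root produces its discrepancy on $Q_i$ itself, and there is no need to pass to the henselization.
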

Here \(\d(Q_i)\) denotes the distance of \(X\) from the nearest \(K\)-conjugate of \(a\). As observed in \cite{Novacoski2019}, this value is independent of the choice of the common extension of \(w\) and \(\overline{v}\) to \(\overline{K}(X)\). Thus this theorem shows that regularity is the exact condition in this framework which allows the root-by-root extension property to hold in the presence of limit key polynomials. As an application, we obtain that if \((K,v)\) is dense in the henselization \(K^h\), then every complete sequence of key polynomials is regular. In particular, this holds whenever \( (K,v) \) is henselian or has rank one.


\section{Preliminaries}

Let \((K(X)|K,w)\) be an extension of valued fields. Then \(w\) satisfies the \textbf{Abhyankar inequality}:
\[ \dim_\QQ \left( wK(X)/vK \right) + \trdeg [K(X)w:Kv] \leq 1.  \]
This is a consequence of Theorem 1 of \cite[\S 10.3, Chpater VI]{Bourbaki1989}. If equality holds in the above inequality, then we say that \(w\) is \textbf{valuation transcendental}. Otherwise, \(w\) is said to be \textbf{valuation algebraic}. The property of the extension \(w\) being valuation algebraic or valuation transcendental is preserved under passage from \(K(X)\) to \(\overline{K}(X)\). 


\subsection{Pair of definition} Let \(\overline{w}\) be a common extension of \(w\) and \(\overline{v}\) to \(\overline{K}(X)\). It has been observed in \cite[Theorem 3.11]{Kuhlmann2004BadPlaces} that 
\[  \overline{w} \text{ is valuation transcendental if and only if } \overline{w} = \overline{v}_{a,\g} \text{ for some } a\in \overline{K},    \]
that is, 
\[  \overline{w}(X-z) = \min\left\{ \g, \overline{v}(a-z) \right\} \text{ for all } z\in\overline{K}.  \]
The pair \((a,\g)\) is said to be a pair of definition for \((K(X)|K,w)\). The valuation \(w\) may admit several pairs of definition. Indeed, we have the following from \cite[Proposition 3]{AlexandruZaharescu1988}:
\begin{Proposition}\label{Prop pair of defn}
	Take \( a_1,a_2\in\overline{K} \) and \( \g_1, \g_2 \) in some ordered abelian group containing \(\overline{v}\overline{K} \). Then,
	\[ \overline{v}_{a_1,\g_1} = \overline{v}_{a_2,\g_2} \Longleftrightarrow \g_1 = \g_2 \text{ and } \overline{v}(a_1 - a_2)\geq\g_1.  \]
\end{Proposition}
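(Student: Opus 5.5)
Both implications will be obtained by comparing the two valuations on the generators of the polynomial ring \(\overline{K}[X]\). Every polynomial over \(\overline{K}\) factors into linear factors \(X-z\), \(z\in\overline{K}\), and a valuation on \(\overline{K}(X)\) is determined by its values on \(\overline{K}[X]\).

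For the implication \((\Leftarrow)\), assume \(\g_1=\g_2=\g\) and \(\overline{v}(a_1-a_2)\geq \g\). For \(z\in\overline{K}\) we compute
\[ \overline{v}_{a_1,\g}(X-z)=\min\{\g,\overline{v}(a_1-z)\}. \]
Writing \(a_1-z=(a_1-a_2)+(a_2-z)\), we distinguish two cases.
\begin{enumerate}
\item If \(\overline{v}(a_2-z)<\g\), then \(\overline{v}(a_1-z)=\overline{v}(a_2-z)\), because \(\overline{v}(a_1-a_2)\geq\g>\overline{v}(a_2-z)\).
\item If \(\overline{v}(a_2-z)\geq\g\), then \(\overline{v}(a_1-z)\geq\g\) as well.
\end{enumerate}
In either case
\[ \min\{\g,\overline{v}(a_1-z)\}=\min\{\g,\overline{v}(a_2-z)\}. \]
Both valuations are multiplicative, so they agree on every product of linear factors and on every constant. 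Hence they agree on \(\overline{K}[X]\), and therefore on \(\overline{K}(X)\).

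For the implication \((\Rightarrow)\), suppose \(\overline{v}_{a_1,\g_1}=\overline{v}_{a_2,\g_2}\). We evaluate at \(X-a_1\):
\[ \overline{v}_{a_1,\g_1}(X-a_1)=\g_1 \quad\text{and}\quad \overline{v}_{a_2,\g_2}(X-a_1)=\min\{\g_2,\overline{v}(a_2-a_1)\}. \]
This gives \(\g_1\le\g_2\). Evaluating symmetrically at \(X-a_2\) gives \(\g_2\le\g_1\), so \(\g_1=\g_2=:\g\). Substituting back yields \(\min\{\g,\overline{v}(a_1-a_2)\}=\g\), that is, \(\overline{v}(a_1-a_2)\geq\g\).

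The one point needing care, which I regard as the main (though minor) obstacle, is that \(\overline{v}_{a,\g}\) is genuinely determined by its values on linear polynomials. This follows from its defining formula \(\overline{v}_{a,\g}(X-z)=\min\{\g,\overline{v}(a-z)\}\) together with the algebraic closedness of \(\overline{K}\) and multiplicativity. Equivalently, one can invoke the Taylor-expansion description: the value of \(\sum c_i(X-a)^i\) is \(\min_i\{\overline{v}c_i+i\g\}\). Under \(\overline{v}(a_1-a_2)\geq\g\), re-expanding around \(a_2\) does not lower this minimum, by the ultrametric inequality.
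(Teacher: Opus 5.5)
Your proof is correct and complete. The paper gives no argument of its own for this statement. It is imported verbatim from \cite[Proposition 3]{AlexandruZaharescu1988}, so there is no internal route to compare against, and your argument supplies exactly what is cited.

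In the ($\Leftarrow$) direction, you split on whether $\overline{v}(a_2-z)<\gamma$ and then use factorization over $\overline{K}$ together with multiplicativity. This is the natural elementary proof. In the ($\Rightarrow$) direction, evaluating at $X-a_1$ and $X-a_2$ yields $\gamma_1\le\gamma_2\le\gamma_1$ and then $\overline{v}(a_1-a_2)\ge\gamma$, which is exactly right.

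The one external fact you use is that $\sum c_i(X-a)^i\mapsto\min_i\{\overline{v}c_i+i\gamma\}$ is genuinely a valuation for arbitrary $\gamma$ in the larger ordered group; your appeal to multiplicativity depends on it. This is standard and presupposed by the statement, but it is worth naming explicitly.

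Your argument also isolates two principles. First, a valuation on $\overline{K}(X)$ extending $\overline{v}$ is determined by its values on the linear polynomials $X-z$. Second, $\overline{v}_{a,\gamma}(X-c)=\gamma$ forces $\overline{v}(a-c)\ge\gamma$. The paper uses both without comment in its proof of Proposition~\ref{Prop restriction iff a, gamma = sigma b, gamma}, so your self-contained argument fits cleanly with the rest of the paper.
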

In light of the above proposition, we define a pair of definition \( (a,\g) \) to be a \textbf{minimal pair of definition for \( (K(X)|K,w) \)} if \(a\) has minimal degree over \(K\) among all pairs of definition, i.e., 
\[ \overline{v}(a-b)\geq\g \Longrightarrow \deg_K(a) \leq \deg_K(b).  \]
The next theorem, proved in \cite[Theorem 2.1]{AlexandruPopescuZaharescu1988}, shows that minimal pairs of definition can be employed to give complete descriptions of the valuation \(w\). Let \(Q(X)\) be the minimal polynomial of \(a\) over \(K\) and take \( f(X)\in K[X] \). Then we have a unique expansion, called the \(Q\)-expansion
\begin{equation}\label{eqn Q-expansion}
	f = \sum_{i=0}^{r} f_i Q^i,
\end{equation}
where \( f_i \in K[X]\) with \( \deg f_i < \deg Q \).

\begin{Theorem}\label{Thm w = v_Q}
	\( wf = \min\{ wf_i + iwQ \} = \min\{ vf_i(a) + iwQ \}.  \)
\end{Theorem}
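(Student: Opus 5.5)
We have to prove Theorem \ref{Thm w = v_Q}: for a minimal pair of definition \((a,\g)\) with \(Q\) the minimal polynomial of \(a\) over \(K\), and \(f=\sum f_iQ^i\) its \(Q\)-expansion, we have \(wf=\min\{wf_i+iwQ\}=\min\{vf_i(a)+iwQ\}\).

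We first prove the second equality \(wf_i=vf_i(a)\) for every \(f_i\in K[X]\) with \(\deg f_i<\deg Q\). Working in \(\overline{K}(X)\) with \(\overline{w}=\overline{v}_{a,\g}\), factor \(f_i=c\prod_j(X-b_j)\) with \(b_j\in\overline{K}\). Then
\[ wf_i=vc+\sum_j\min\{\g,\overline{v}(a-b_j)\}, \qquad vf_i(a)=vc+\sum_j\overline{v}(a-b_j). \]
It therefore suffices to show \(\overline{v}(a-b_j)\le\g\) for all \(j\), i.e. no root of \(f_i\) lies strictly closer to \(a\) than \(\g\). The key step is to show that if \(\deg_K b<\deg_K a\) then \(\overline{v}(a-b)<\g\). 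This is exactly minimality: if \(\overline{v}(a-b)\ge\g\), then by Proposition \ref{Prop pair of defn} the pair \((b,\g)\) is also a pair of definition, contradicting minimality of \(\deg_K a\). Since each root \(b_j\) of \(f_i\) satisfies \(\deg_K b_j\le\deg f_i<\deg Q=\deg_K a\), we get \(\overline{v}(a-b_j)<\g\). Hence \(wf_i=vf_i(a)\), and the same argument gives \(wf_i=\overline{w}f_i(X')\) for any \(X'\) with \(\overline{v}(X'-a)\ge\g\).

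For the first equality, the inequality \(wf\ge\min_i\{wf_i+iwQ\}\) is immediate. For the reverse, we would use the standard trick of passing to residues. Choose \(c\in\overline{K}\) with \(\overline{v}c=\g\); this is possible at least after enlarging the value group, and if \(\g\notin\overline{v}\overline{K}\) we argue directly, as below. Set \(t=(X-a)/c\), so that \(\overline{w}\) is the Gauss valuation in \(t\) over \(\overline{K}\), and the residue of \(t\) is transcendental. Writing each \(g\in\overline{K}[X]\) as a polynomial in \(t\), the value \(\overline{w}g\) is the minimum of the coefficient values, and the leading form of \(g\) is obtained by taking residues.

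We then show that for the terms \(f_iQ^i\) attaining the minimum \(\mu\), the suitably normalized residues are of the form \(r_i(\bar t)\,\bar q^{\,i}\). Here \(r_i\) comes from \(f_i\); by the first step, \(f_i\) has no roots in the disk \(\overline{v}(X-a)\ge\g\), so its residue is a nonzero constant times a power of \(\bar t\)-independent factors. Indeed, each factor \(X-b_j\) with \(\overline{v}(a-b_j)<\g\) has residue a constant, so the residue of \(f_i\) is a constant \(\bar u_i\in\overline{K}\overline{v}^\times\). The factor \(\bar q\) is the residue of \(Q\), a nonconstant polynomial in \(\bar t\), since \(Q\) has the root \(a\) in the disk. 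Thus the sum of the leading residues is \(\sum_{i\in I}\bar u_i\,\bar q^{\,i}\), a nonzero polynomial in \(\bar t\) because the distinct powers \(\bar q^{\,i}\) are linearly independent over the constants. So no cancellation occurs, and \(wf=\mu\). In the case \(\g\notin\overline{v}\overline{K}\), the argument is easier: the values \(wf_i+iwQ\) for distinct \(i\) lie in distinct cosets modulo \(\overline{v}\overline{K}\), because \(wQ=\overline{v}\text{-part}+m\g\) with \(m\ge1\) the number of roots of \(Q\) in the disk. Hence the minimum is attained uniquely.

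The main obstacle is the residue bookkeeping in the residue-transcendental case: one must normalize \(f_iQ^i\) by a common constant of value \(\mu\) and verify that the residue of \(Q\) is a nonconstant polynomial in \(\bar t\) while the residues of the \(f_i\) are constants. Both reduce to the first step and to \(Q(a)=0\).
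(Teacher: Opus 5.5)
Your argument is correct. The paper itself gives no proof of this statement: it is imported from \cite[Theorem 2.1]{AlexandruPopescuZaharescu1988} and used as a black box. So what you have written is a self-contained alternative to a citation, not a reconstruction of an argument in the text.

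Its only input is the minimality of $(a,\g)$. Minimality forces every root $z$ of each $f_i$ to satisfy $\overline{v}(a-z)<\g$. This gives $wf_i=\overline{v}f_i(a)$ directly from the factorization over $\overline{K}$. For the Gauss valuation in $t=(X-a)/c$, it also makes the normalized residue of $f_i$ a nonzero constant, while that of $Q$ has degree $j_w(Q)\geq 1$ in $\bar t$. The surviving leading terms $\bar u_i\bar q^{\,i}$ therefore have pairwise distinct $\bar t$-degrees and cannot cancel.

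The citation buys brevity. Your route buys transparency and a little more. Compare the $\bar t$-degree of the residue of $f$ computed from its root factorization, which is $j_w(f)$, with the one computed from the $Q$-expansion, which is $j_w(Q)\cdot\max\{i\mid wf=wf_i+iwQ\}$. This yields Theorem \ref{Thm j(f)/j(Q)} as a by-product. The analogous comparison of cosets does the same when $\g\notin\overline{v}\overline{K}$.

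The one step you should spell out is the coset claim in that value-transcendental case. Suppose $(i-i')\,j_w(Q)\,\g\in\overline{v}\overline{K}$ with $i\neq i'$. Divisibility of $\overline{v}\overline{K}$ and torsion-freeness of the ambient ordered group then force $\g\in\overline{v}\overline{K}$, a contradiction. Also drop the phrase about choosing $c\in\overline{K}$ with $\overline{v}c=\g$ ``after enlarging the value group.'' Enlarging the group cannot produce such a $c$ in $\overline{K}$, and you treat that case separately anyway.
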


There exist strong connections between the \(K\)-conjugates of a minimal center and the common extensions of \(w\) and \(\overline{v}\) to \(\overline{K}(X)\). The following result appears in \cite[Theorem 2.2]{AlexandruPopescuZaharescu1990a} and \cite[Theorem 3.11]{Dutta2021}:

\begin{Theorem}\label{Thm common extns APZ}
	Let \(\overline{w_1}\) and \(\overline{w_2}\) be two valuation transcendental extensions of \(\overline{v}\) to \(\overline{K}(X)\). Then the following are equivalent:
	\begin{enumerate}[label=(\roman*)]
		\item \( \overline{w_1}|_{K(X)} = \overline{w_2}|_{K(X)} \),
		\item there exist minimal pairs of definition \((a_i,\g_i)\) for \(\overline{w_i}\) over \(K\) such that 
		\begin{enumerate}
			\item \( \g_1 = \g_2 \),
			\item \(a_1\) and \(a_2\) are conjugates over \(K\), and 
			\[ \overline{v} f(a_1) = \overline{v} f(a_2) \text{ for all } f(X) \in K[X] \text{ with } \deg f < \deg_K(a_1). \]
		\end{enumerate}
	\end{enumerate}
\end{Theorem}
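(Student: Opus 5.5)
The proof splits into the two implications. I use two tools: the Taylor-expansion description of monomial valuations, and the conjugacy of extensions of a valuation in a normal algebraic extension.

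\emph{(ii) $\Rightarrow$ (i).} Write $a:=a_1$, $\g:=\g_1=\g_2$, and let $Q$ be the common minimal polynomial of $a_1,a_2$ over $K$, of degree $n$. For $f\in\overline{K}[X]$, the monomial valuation is given by Taylor expansion:
\[ \overline{v}_{a_i,\g}(f)=\min_{j}\{\overline{v}(\partial_j f(a_i))+j\g\}, \]
where $\partial_j$ denotes the $j$-th Hasse derivative.

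For $f\in K[X]$ of degree $<n$, Theorem \ref{Thm w = v_Q} applied to the minimal pair $(a_i,\g)$ gives $\overline{v}_{a_i,\g}(f)=\overline{v}f(a_i)$. By hypothesis these values coincide for $i=1,2$.

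For $Q$ itself, $\partial_0Q(a_i)=0$ and $\partial_nQ=1$. For $1\le j<n$, $\partial_jQ\in K[X]$ has degree $<n$, so $\overline{v}(\partial_jQ(a_1))=\overline{v}(\partial_jQ(a_2))$ by (ii)(b). Hence $\overline{v}_{a_1,\g}(Q)=\overline{v}_{a_2,\g}(Q)$.

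Now take any $f\in K[X]$ with $Q$-expansion $f=\sum f_kQ^k$. By Theorem \ref{Thm w = v_Q}, $\overline{v}_{a_i,\g}(f)=\min_k\{\overline{v}f_k(a_i)+k\,\overline{v}_{a_i,\g}(Q)\}$. Every ingredient of this formula agrees for $i=1,2$. So the two restrictions coincide on $K[X]$, and therefore on $K(X)$.

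\emph{(i) $\Rightarrow$ (ii).} Since $\overline{K}(X)|K(X)$ is normal and $\overline{w_1},\overline{w_2}$ both extend the common restriction $w$, conjugacy of extensions gives $\overline{w_2}=\overline{w_1}\circ\s$ for some $\s\in G=\Gal(\overline{K}(X)|K(X))$.

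Restricting this identity to $\overline{K}$, where both valuations are $\overline{v}$, gives $\overline{v}=\overline{v}\circ\s$, i.e.\ $\s\in G^d$.

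Now take a minimal pair $(a_1,\g_1)$ for $\overline{w_1}$ and set $a_2:=\s^{-1}a_1$. For $z\in\overline{K}$ we have $\s X=X$, so
\[ \overline{w_2}(X-z)=\overline{w_1}(X-\s z)=\min\{\g_1,\overline{v}(a_1-\s z)\}=\min\{\g_1,\overline{v}(\s^{-1}a_1-z)\}, \]
using $\s^{-1}\in G^d$ in the last step. Thus $\overline{w_2}=\overline{v}_{a_2,\g_1}$, so $(a_2,\g_1)$ is a pair of definition for $\overline{w_2}$.

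This pair is minimal. If $(b,\g_1)$ is another pair of definition for $\overline{w_2}$, then $(\s b,\g_1)$ is one for $\overline{w_1}$, and $\deg_K(\s b)=\deg_K b$. So the minimal degrees for $\overline{w_1}$ and $\overline{w_2}$ agree, and $\deg_K a_2=\deg_K a_1$ attains that minimum.

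Finally, $a_1,a_2$ are $K$-conjugate by construction. For $f\in K[X]$ we have $f(a_2)=\s^{-1}(f(a_1))$, so $\overline{v}f(a_2)=\overline{v}f(a_1)$ because $\s^{-1}\in G^d$. Therefore the particular minimal pair $(a_2,\g_1)$ satisfies (ii).

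\emph{Main obstacle.} The delicate point is the equality of the values of $Q$ in (ii)$\Rightarrow$(i). The hypothesis only controls polynomials of degree $<n$, and $Q$ is not of that form. The Hasse-derivative formula resolves this, since it expresses $\overline{v}_{a_i,\g}(Q)$ through $\g$ and lower-degree polynomials alone. The remaining steps are bookkeeping, relying on the standard fact that extensions in a normal algebraic extension are conjugate.
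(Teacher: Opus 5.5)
Your proof is correct. The paper does not prove this statement itself: it is quoted from \cite[Theorem 2.2]{AlexandruPopescuZaharescu1990a} and \cite[Theorem 3.11]{Dutta2021}. Your argument is therefore a self-contained substitute. It uses only Proposition~\ref{Prop pair of defn}, Theorem~\ref{Thm w = v_Q} and the conjugacy of extensions of a valuation in a normal algebraic extension.

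\emph{The direction (i)$\Rightarrow$(ii).} This is the same conjugation idea the paper uses in the proof of Proposition~\ref{Prop restriction iff a, gamma = sigma b, gamma}, where $\overline{v}_{a,\gamma}=\overline{v}_{b,\gamma}\circ\tau$ with $\tau\in G$. The paper shows $\tau\in G^d$ indirectly, via the identity $\min\{\gamma,\overline{v}z\}=\min\{\gamma,\overline{v}\tau z\}$ and a case distinction on $\gamma$. You get $\sigma\in G^d$ in one line by restricting $\overline{w_2}=\overline{w_1}\circ\sigma$ to $\overline{K}$, where both sides are $\overline{v}$. That shortcut is valid, and it would also shorten the paper's own argument. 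Your construction $a_2=\sigma^{-1}a_1$ in fact gives $\overline{v}f(a_1)=\overline{v}f(a_2)$ for all $f\in K[X]$, not only for $\deg f<\deg_K(a_1)$.

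\emph{The direction (ii)$\Rightarrow$(i).} Theorem~\ref{Thm w = v_Q} reduces everything to the single value $\overline{w_i}(Q)$. You compute it from the Taylor expansion at $a_i$. Its coefficients $\partial_jQ(a_i)$ for $1\le j<n$ are values of polynomials of degree $<n$, and $\partial_nQ=1$. This is exactly where the degree bound in (b) is used, and you handle it correctly. Vanishing Hasse derivatives in positive characteristic cause no problem.
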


As an immediate consequence, we obtain the following: 

\begin{Proposition}\label{Prop finitely many common extns}
	Let \(w\) be valuation transcendental. Then there exist finitely many common extensions of \(w\) and \(\overline{v}\) to \(\overline{K}(X)\). 
\end{Proposition}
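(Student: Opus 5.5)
The plan is to deduce the statement directly from Theorem \ref{Thm common extns APZ}, together with Proposition \ref{Prop pair of defn}. Let \(\overline{w}\) be any common extension of \(w\) and \(\overline{v}\) to \(\overline{K}(X)\). By the cited result of Kuhlmann, \(\overline{w}=\overline{v}_{a,\g}\) for some pair of definition \((a,\g)\). Since every pair of definition for \(\overline{w}\) has the same \(\g\) (Proposition \ref{Prop pair of defn}), we may take \((a,\g)\) to be a minimal pair of definition. Fix one common extension \(\overline{v}_{a_0,\g}\) with \(a_0\) minimal of degree \(n\) over \(K\).

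Now let \(\overline{w}\) be another common extension. By the implication (i)\(\Rightarrow\)(ii) of Theorem \ref{Thm common extns APZ}, applied to \(\overline{w}_1=\overline{v}_{a_0,\g}\) and \(\overline{w}_2=\overline{w}\), there are minimal pairs \((a_1,\g)\) and \((a_2,\g)\) for these two valuations with \(a_2\) a \(K\)-conjugate of \(a_1\). Proposition \ref{Prop pair of defn} gives \(\overline{v}_{a_1,\g}=\overline{v}_{a_0,\g}\), so \(\overline{w}=\overline{v}_{a_2,\g}\). Hence every common extension has the form \(\overline{v}_{b,\g}\), where \(b\) ranges over the roots of the minimal polynomial of \(a_1\) over \(K\). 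This polynomial has degree \(n\), so it has at most \(n\) roots.

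The one point needing care is that \(a_1\) is not canonical: a priori, different \(\overline{w}\) could produce different \(a_1\)'s, each close to \(a_0\). I would resolve this as follows. Since \(\overline{v}_{a_1,\g}=\overline{v}_{a_0,\g}\), we have \(\overline{v}(a_1-a_0)\ge\g\), so \(a_1\) lies in the closed ball of radius \(\g\) around \(a_0\). Therefore \(\overline{v}(\s a_1-\s a_0)\ge\g\) for every \(\s\) in the decomposition group. For a general \(\s\in G\), however, \(\overline{v}\) does not commute with \(\s\). So instead of comparing conjugates of \(a_1\) and \(a_0\) directly, I would count valuations.

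Each common extension \(\overline{w}\) is \(\overline{v}_{b,\g}\) for some \(b\) that is a \(K\)-conjugate of some minimal center of \(\overline{v}_{a_0,\g}\). Such a \(b\) is a root of some monic polynomial \(Q\in K[X]\) of degree \(n\), namely the minimal polynomial of a minimal center. By Theorem \ref{Thm w = v_Q}, every such \(Q\) attains the value \(wQ\). The valuation \(\overline{v}_{b,\g}\) is then determined by \(b\) up to the ball of radius \(\g\). I would show that any two minimal-degree polynomials \(Q,Q'\) whose roots give minimal pairs for common extensions of \(w\) have root sets matching up to distance \(\g\). To do this, write \(Q'=Q+(Q'-Q)\) with \(\deg(Q'-Q)<n\), and use Theorem \ref{Thm w = v_Q} together with the fact that \(\overline{w}(X-b')\le\g\) to compare \(\overline{v}Q(b')\) with \(\overline{w}Q\).

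This comparison step is the main obstacle. If it proves too delicate, the fallback is a softer argument: the restriction of \(\overline{w}\) to \(K(a_0)(X)\) is one of the finitely many extensions of \(w\) through the finite extension \(K(a_0)(X)|K(X)\), and extensions to \(\overline{K}(X)\) are permuted transitively by \(\Gal(\overline{K}(X)|K(X))\). Such an extension is then determined by its restriction to a suitable finite extension \(K(a_0)\)-related field, because \(\overline{v}_{a_0,\g}\) is defined over \(K(a_0)\) up to \(\overline{v}\).
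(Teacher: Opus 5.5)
Your main line stalls exactly where you say it does, and nothing later closes it, so as it stands this is not a proof.

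Theorem~\ref{Thm common extns APZ}, read with an existential quantifier on the minimal pair of $\overline{w}_1$ as well, gives very little for each common extension $\overline{w}$. It only produces some minimal center $a_1$ of $\overline{v}_{a_0,\g}$, depending on $\overline{w}$, with $\overline{w}=\overline{v}_{a_2,\g}$ for a $K$-conjugate $a_2$ of $a_1$. The ball $\{b:\overline{v}(b-a_0)\ge\g\}$ in general contains infinitely many minimal centers with different minimal polynomials. So ``at most $n$ roots'' bounds nothing until you know that the $a_2$ arising this way lie within $\g$ of conjugates of the \emph{fixed} $a_0$. That is precisely the crux, and it is left open.

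The intermediate claim you propose is moreover false when read for all roots. Let $K=\mathbb{Q}(t)$ carry the rank-two valuation obtained by composing the $t$-adic valuation with the $5$-adic valuation of the residue field $\mathbb{Q}$ (values in $\mathbb{Z}\times\mathbb{Z}$, lexicographic, $vt=(1,0)$). Choose $\overline{v}$ inducing the $(2+i)$-adic valuation on the constants $\mathbb{Q}(i)$, and let $\g=(1,0)$. No element of $K$ lies within $\g$ of $i$, so $(i,\g)$ is a minimal pair. Since $\overline{v}(a_1-i)=(1,1)\ge\g$, the element $a_1=i+t\frac{2+i}{2-i}$ is another minimal center. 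Yet its conjugate $b'=-i+t\frac{2-i}{2+i}$ satisfies $\overline{v}(b'+i)=(1,-1)<\g$ and $\overline{v}(b'-i)=(0,0)<\g$.

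The fallback paragraph only asserts the needed fact (``determined by its restriction to a suitable finite extension'') without an argument.

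The missing idea is to get the conjugating automorphism from the valuations, not from the centers. Fix one common extension $\overline{v}_{a_0,\g}$.
\begin{itemize}
\item Since $\overline{K}(X)|K(X)$ is normal, all extensions of $w$ to $\overline{K}(X)$ are conjugate. So any common extension is $\overline{w}=\overline{v}_{a_0,\g}\circ\s$ for some $\s\in G$.
\item Restricting to $\overline{K}$ gives $\overline{v}\circ\s=\overline{v}$, i.e.\ $\s\in G^d$.
\item Your own observation that elements of $G^d$ preserve $\g$-closeness now applies. The computation in the proof of Proposition~\ref{Prop restriction iff a, gamma = sigma b, gamma} yields $\overline{v}_{a_0,\g}\circ\s=\overline{v}_{\s^{-1}a_0,\g}$.
\end{itemize}
Hence every common extension is $\overline{v}_{a',\g}$ for a $K$-conjugate $a'$ of the fixed element $a_0$, so there are at most $\deg_K(a_0)$ of them; minimality of $a_0$ plays no role.

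The same argument shows that in Theorem~\ref{Thm common extns APZ} the minimal pair $(a_1,\g)$ of $\overline{w}_1$ can be prescribed in advance (take $a_2=\s^{-1}a_1$). In that form, the paper's one-line derivation of the statement from Theorem~\ref{Thm common extns APZ} is genuinely immediate. So your concern about the purely existential reading was legitimate, but the proposal does not resolve it.
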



\subsection{The \(j\)-invariant} We recall several facts about the \(j\)-invariant which will be used throughout this paper. The importance of the \(j\)-invariant for the present paper stems from its ability to relate roots of minimal centers to the implicit constant field and truncated valuations. 

\pars Assume that \(w\) is valuation transcendental. Take a common extension \(\overline{w}\) and a minimal pair of definition \((a,\g)\) for \((K(X)|K,w)\). For any polynomial \(f(X)\in K[X]\), we denote by \(\mathcal{R}(f)\) the \textit{multiset} of roots of \(f\). Define 
\[  j_w(f) := \left| \left\{ z\in\mathcal{R}(f) \mid \overline{v}(a-z)\geq\g \right\} \right|. \]  
Observe that we count the roots with multiplicities in the above definition. This operator \( j_w(-) \) was introduced by Dutta in \cite{Dutta2022} and is said to be the \textbf{\(j\)-invariant corresponding to \(w\)}. The invariance of \(j_w(-)\) was established in \cite[Theorem 3.1]{Dutta2024Invariant}. In particular, it is independent of both the chosen extension of \(w\) to \(\overline{K}(X)\) and the chosen pair of definition. 

\pars The \(j\)-invariant encodes valuable ramification theoretic information, as well as providing important descriptions of the valuation \(w\). The following observation is a consequence of \cite[Propositions 3.4 and 2.14]{Dutta2024Invariant} and will play a key role in the sequel: 

\begin{Theorem}\label{Thm j(f)/j(Q)}
	Take a polynomial \( f \in K[X] \). Then,
	\[ j_w(Q) \text{ divides } j_w(f).  \]
	More precisely, considering the expansion (\ref{eqn Q-expansion}), we have
	\[  \dfrac{j_w(f)}{j_w(Q)} = \max \left\{ i \mid wf = wf_i + iwQ \right\}.  \]
\end{Theorem}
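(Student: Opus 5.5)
The plan is to read off \(j_w(f)\) as the rate at which \(\overline{w}f\) changes when \(\g\) is perturbed slightly downward. I will then compare two computations of the perturbed value: one via roots, and one via Theorem \ref{Thm w = v_Q} applied to the \(Q\)-expansion. Divisibility will come out as a by-product of the exact formula.

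\emph{Step 1 (a root formula).} For any \(g\in K[X]\) with leading coefficient \(c\) and any \(\beta\), factoring \(g=c\prod_{z\in\mathcal R(g)}(X-z)\) over \(\overline K\) gives
\[ \overline{v}_{a,\beta}(g)=\overline{v}c+\sum_{z\in\mathcal R(g)}\min\{\beta,\overline{v}(a-z)\}. \]
Now enlarge the value group to \(\overline{v}\overline{K}\oplus\ZZ\) ordered lexicographically. Put \(\g'=\g-\epsilon\), where \(\epsilon=(0,1)\) is a positive infinitesimal. No element of \(\overline{v}\overline{K}\) lies in \([\g',\g)\). Hence each root \(z\) with \(\overline v(a-z)\geq\g\) contributes \(\g'\) instead of \(\g\), and every other root contributes the same value at \(\g\) and at \(\g'\). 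Therefore
\[ \overline{v}_{a,\g'}(g)=\overline{v}_{a,\g}(g)-j_w(g)\,\epsilon . \]
Here \(\overline{v}_{a,\g}(g)=wg\), since \(\overline{v}_{a,\g}\) is the common extension fixed in the definition of \(j_w\).

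\emph{Step 2 (applying the expansion theorem to the perturbed pair).} For \(b\in\overline K\) we have \(\overline v(a-b)\geq\g'\) if and only if \(\overline v(a-b)\geq\g\). So \((a,\g')\) is again a minimal pair of definition, now for \(w':=\overline{v}_{a,\g'}|_{K(X)}\). Theorem \ref{Thm w = v_Q} was stated for an arbitrary value \(\g\) in an ordered group containing \(\overline v\overline K\), so I am assuming it applies here as well. It gives
\[ w'f=\min_i\{\overline v f_i(a)+i\,w'Q\},\qquad wf=\min_i\{\overline v f_i(a)+i\,wQ\}. \]
By Step 1, \(w'f=wf-j_w(f)\epsilon\) and \(w'Q=wQ-j_w(Q)\epsilon\). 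The terms \(\overline v f_i(a)\) do not depend on \(\g\), and \(\overline v f_i(a)=wf_i\) by Theorem \ref{Thm w = v_Q}. Hence
\[ wf-j_w(f)\epsilon=\min_i\{wf_i+i\,wQ-i\,j_w(Q)\epsilon\}. \]

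\emph{Step 3 (comparing infinitesimal parts).} Because \(\epsilon\) is infinitesimal, the minimum on the right is attained only at indices \(i\) with \(wf_i+iwQ=wf\). Among those indices it is attained at the largest one, since \(j_w(Q)\geq1\) (the root \(a\) of \(Q\) is counted). Comparing the \(\epsilon\)-coefficients gives
\[ j_w(f)=j_w(Q)\cdot\max\{i\mid wf=wf_i+iwQ\}. \]
This proves both the divisibility and the formula.

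The main obstacle is the legitimacy of Step 2. One must check that Theorem \ref{Thm w = v_Q}, and the minimality of the pair, survive the passage to the enlarged value group \(\overline v\overline K\oplus\ZZ\). If this is doubtful, I would instead take \(\epsilon\) to be a genuine small positive element of the divisible hull of \(\overline v\overline K\). It only needs to be smaller than every gap \(\g-\overline v(a-z)>0\), for \(z\) ranging over the finitely many roots of \(f\), of \(Q\) and of the \(f_i\). In that case the comparison in Step 3 can be done directly by the same argument.
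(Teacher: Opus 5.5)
Your proof is correct, but it does not follow the paper's route: the paper gives no argument for this statement and simply imports it from \cite[Propositions 3.4 and 2.14]{Dutta2024Invariant}. Your argument is self-contained. Apart from the root formula for monomial valuations, it uses only Theorem \ref{Thm w = v_Q}, applied once to \(w\) and once to the infinitesimally shifted valuation \(w'\). In effect you show that \(j_w(f)\) is the left slope at \(\g\) of \(\beta\mapsto\overline{v}_{a,\beta}(f)\). Equivalently, it is the degree in \(\mathrm{in}(X-a)\) of the initial form of \(f\) in the graded algebra of \(\overline{w}\). You then show this slope equals \(j_w(Q)\) times the largest dominant index of the \(Q\)-expansion, so divisibility comes for free. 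The citation places the result inside the general theory of the \(j\)-invariant; your version is short and explains why the formula holds.

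\emph{The flagged obstacle.} This is not a real obstacle, provided you take the lexicographic group to be \(\Gamma\oplus\ZZ\) with \(\Gamma\supseteq\overline{v}\overline{K}\) containing \(\g\). In the value transcendental case \(\g\notin\overline{v}\overline{K}\), so \(\overline{v}\overline{K}\oplus\ZZ\) as you wrote it is too small. With this choice:
\begin{enumerate}
\item No nonzero multiple of \(\g'=(\g,-1)\) lies in \(\overline{v}\overline{K}\), so \(\overline{v}_{a,\g'}\) and \(w'\) are valuation transcendental.
\item By your Step 2, \((a,\g')\) is a minimal pair of definition for \(w'\), so Theorem \ref{Thm w = v_Q} applies exactly as stated.
\item Its proof is in any case insensitive to the ambient group. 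It only needs \(\overline{v}_{a,\g'}(f_i)=\overline{v}f_i(a)\), which holds because the roots of \(f_i\) have degree less than \(\deg_K a\).
\end{enumerate}

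\emph{The fallback.} Discard the version with a genuine \(\epsilon\in\overline{v}\overline{K}\), for three reasons:
\begin{enumerate}
\item Minimality of \((a,\g-\epsilon)\) concerns all \(b\) of smaller degree, not just the finitely many roots you control.
\item Step 3 would also need \(\epsilon\) to be small compared with the gaps \(wf_i+iwQ-wf\).
\item If \(\g\notin\overline{v}\overline{K}\), a suitable positive \(\epsilon\in\overline{v}\overline{K}\) may not exist at all.
\end{enumerate}
The infinitesimal version avoids all of this.
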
 

The \(j\)-invariant is also intimately connected to the theory of implicit constant fields which was introduced by Kuhlmann in \cite{Kuhlmann2004BadPlaces}. Fix an extension of \(\overline{w}\) to \(\overline{K(X)}\). Recall that
\[ IC_K(w) := \overline{K}\sect K(X)^h.  \]
It has been observed in \cite[Lemma 5.1]{Dutta2021} that 
\[  IC_K(w) \subseteq K(a)^h. \]
Since \(IC_K(w)\) is relatively algebraically closed in \(K(X)^h\), the algebraic extension \(K(a)^h|IC_K(w)\) is linearly disjoint to \(K(X)^h|IC_K(w)\). As a consequence, we conclude the following from \cite[Proposition 2.9]{DuttaGhosh2025}:

\begin{Proposition}\label{Prop j(Q) = [K(a)^h:IC_K(w)]}
	\( [K(a)^h : IC_K(w)] = j_w(Q) \).
\end{Proposition}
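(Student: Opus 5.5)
The plan is to reduce the statement to a count of conjugates over the henselian field \(K(X)^h\). Write \(L:=IC_K(w)\). By \cite[Lemma 5.1]{Dutta2021} we have \(L\subseteq K(a)^h=K^h(a)\), and \(K(a)^h=L(a)\). Since \(L\) is relatively algebraically closed in \(K(X)^h\), the algebraic extension \(L(a)|L\) is linearly disjoint from \(K(X)^h|L\). Hence
\[ [K(a)^h:L]=[L(a):L]=[K(X)^h(a):K(X)^h]. \]
It therefore suffices to show that the number of \(K(X)^h\)-conjugates of \(a\) equals \(j_w(Q)\). Here \(j_w(Q)\) is the number of roots \(z\) of \(Q\) with \(\overline{v}(a-z)\geq\g\); these roots are simple, since \(Q\) is separable or we work with the separable part. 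The conjugates are taken inside \(\overline{K(X)}\), using the fixed extension of \(\overline{w}\).

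\emph{Inclusion of conjugates into the disk.} Every \(\s\in\Gal(\overline{K(X)}|K(X)^h)\) preserves the fixed extension of \(\overline{w}\), because \(K(X)^h\) is henselian with respect to it. Hence
\[ \overline{w}(X-\s a)=\overline{w}(\s(X-a))=\g. \]
By the description \(\overline{w}(X-z)=\min\{\g,\overline{v}(a-z)\}\), this gives \(\overline{v}(a-\s a)\geq\g\). So every \(K(X)^h\)-conjugate of \(a\) is a root of \(Q\) lying in the disk, and \([K(X)^h(a):K(X)^h]\leq j_w(Q)\).

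\emph{Reverse inequality.} Regard \(Q(Y)\) as a polynomial in \(Y\) over \(K(X)^h\). Split its roots into those \(z\) with \(\overline{w}(X-z)=\g\), i.e.\ \(\overline{v}(a-z)\geq\g\), and those with \(\overline{w}(X-z)=\overline{v}(a-z)<\g\). I would first try Hensel's lemma, after the substitution \(Y\mapsto X-cY\) with \(\overline{w}(c)=\g\), or by a Newton-polygon argument over \(K(X)^h\). This should give a factorization \(Q=Q_1Q_2\) over \(K(X)^h\), where \(Q_1=\prod_{\overline{v}(a-z)\geq\g}(Y-z)\) has degree \(j_w(Q)\).

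The main obstacle is showing that \(Q_1\) is irreducible over \(K(X)^h\). Suppose instead that the minimal polynomial \(g\) of \(a\) over \(K(X)^h\) is a proper factor of \(Q_1\). Then its coefficients are algebraic over \(K\) and hence lie in \(L\), because \(L(a)\) and \(K(X)^h\) are linearly disjoint over \(L\). So \(g\in L[Y]\) with \(\deg g<j_w(Q)\). Taking norms down to \(K\) would then yield a polynomial \(f\in K[X]\) whose roots near \(a\) are too few. Concretely, we would get \(j_w(f)\) strictly between consecutive multiples of \(j_w(Q)\), contradicting Theorem \ref{Thm j(f)/j(Q)}, which says that \(j_w(Q)\) divides \(j_w(f)\). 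Making this count precise is the delicate step. If it resists, the fallback is to invoke \cite[Proposition 2.9]{DuttaGhosh2025} directly, which gives the equality \([L(a):L]=j_w(Q)\) under the linear disjointness established in the first step.
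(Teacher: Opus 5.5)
There is a genuine gap: your argument proves only one of the two inequalities.

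\textbf{What is correct.} Your reduction and the easy half agree with the paper. With $L=IC_K(w)$ you have $K^h\subseteq L\subseteq K^h(a)$, so $K(a)^h=L(a)$. Linear disjointness then gives $[K(a)^h:L]=[K(X)^h(a):K(X)^h]$. Your inclusion step is the same argument as $\mathcal{I}\subseteq\mathcal{S}$ in the proof of Theorem \ref{Thm restriction and IC_K(w) and K^h}, and it yields $[K(a)^h:L]\leq j_w(Q)$. You must count roots with multiplicity here, not pass to a ``separable part'', which would change both sides.

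\textbf{The gap.} The missing half is the reverse inequality: $\Gal(\overline{K(X)}|K(X)^h)$ acts transitively on the roots of $Q$ in $\{z:\overline{v}(a-z)\geq\g\}$. This is the whole content of the proposition, and your plan does not establish it.
\begin{enumerate}[label=(\alph*)]
\item The Hensel/Newton-polygon step is beside the point. Your own inclusion argument already shows this set of roots is stable under $\Gal(\overline{K(X)}|K(X)^h)$, so $Q_1\in K(X)^h[Y]$ for free. Only irreducibility of $Q_1$ matters.
\item The norm argument cannot give irreducibility. Since $K^h\subseteq L$, the extension $L|K$ is in general infinite, so you can only take a norm from a finite $K'\subseteq L$ containing the coefficients of $g$. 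Then $N_{K'|K}(g)$ is a monic polynomial over $K$ all of whose roots are $K$-conjugates of $a$. Hence it is a power $Q^m$, and $j_w(Q^m)=m\,j_w(Q)$ is automatically a multiple of $j_w(Q)$.
\item More generally, Theorem \ref{Thm j(f)/j(Q)} only sees polynomials over $K$. Nothing in your plan produces an $f\in K[X]$ whose roots in the disk form a single orbit of size $\deg g<j_w(Q)$.
\end{enumerate}

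\textbf{What is missing, and why it cannot be borrowed.} You need an input tying the minimality of $(a,\g)$ to the henselization. For instance, it would suffice to know that every $K$-conjugate $z$ of $a$ with $\overline{v}(a-z)\geq\g$ is a $K^h$-conjugate. Then, writing $z=\s a$ with $\s\in G^d$, the identity $\overline{v}_{\s a,\g}=\overline{v}_{a,\g}\circ\s^{-1}$ shows that $\s$ stabilizes $\overline{w}$, so $z$ is a $K(X)^h$-conjugate of $a$. But that statement is the forward direction of Theorem \ref{Thm restriction and IC_K(w) and K^h}(ii). The paper derives it from this very proposition, via part (i), so using it here would be circular.

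\textbf{Your fallback.} The fallback is exactly the paper's proof. The paper gives no independent argument: it takes $IC_K(w)\subseteq K(a)^h$ from \cite[Lemma 5.1]{Dutta2021}, uses the same linear disjointness you use, and appeals to \cite[Proposition 2.9]{DuttaGhosh2025}. With the fallback, your write-up coincides with the paper. Without it, it proves only $[K(a)^h:IC_K(w)]\leq j_w(Q)$.
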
 


\subsection{Common extensions and the \(j\)-invariant} Assume that \(w\) is valuation transcendental. By Proposition \ref{Prop finitely many common extns}, there exist finitely many common extensions of \(w\) and \(\overline{v}\) to \(\overline{K}(X)\), say
\[ w^{(1)}, \dotsb, w^{(t)}. \]
For each \(w^{(i)}\), fix a minimal pair of definition \( (a_i,\g) \). We can choose the \(a_i\) to be conjugates over \(K\) by Theorem \ref{Thm common extns APZ}. For any polynomial \(f(X) \in K[X]\), define the multisets
\begin{align*}
	\mathcal{S}^{(i)}_w (f) &:= \left\{ z\in\mathcal{R}(f) \mid \overline{v}(a_i - z)\geq \g \right\},\\
	\mathcal{S}_w (f) &:= \left\{ z \in \mathcal{R}(f) \mid \overline{v}_{z,\g}|_{K(X)} = w \right\}. 
\end{align*}
Observe that
\[ i \neq j \Longrightarrow  \mathcal{S}^{(i)}_w (f) \sect \mathcal{S}^{(j)}_w (f) = \emptyset. \]
As a consequence, 
\[ \mathcal{S}_w (f) = \bigsqcup_i \mathcal{S}^{(i)}_w (f)  \]
is a disjoint union. Moreover, since the operator \( j_w(-) \) is independent of the choice of the common extension, we have
\[  \left| \mathcal{S}^{(i)}_w (f) \right| = j_w(f) \text{ for all } i. \]
We have thus arrived at the following result which will play a crucial role in the proof of Theorem \ref{Thm restriction and regularity}:

\begin{Theorem}\label{Thm S_f and j_w(f)}
	Assume that \( (K(X)|K,w) \) is valuation transcendental. Let \( w^{(1)}, \dotsb, w^{(t)} \) be all the common extensions of \(w\) and \(\overline{v}\) to \(\overline{K}(X)\). Then for any polynomial \(f(X) \in K[X]\), we have 
	\[ \left| \mathcal{S}_w (f) \right| = t \,  j_w(f).   \]
\end{Theorem}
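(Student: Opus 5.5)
The statement follows from the decomposition of \(\mathcal{S}_w(f)\) displayed just before it. The plan has three steps. First, verify that \(\mathcal{S}_w(f)=\bigsqcup_i \mathcal{S}^{(i)}_w(f)\) as multisets. Second, verify that the union is disjoint. Third, show that each piece has cardinality \(j_w(f)\) and add up.

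\emph{Equality of multisets.} Take a root \(z\) of \(f\) with \(\overline{v}(a_i-z)\geq\g\). Proposition \ref{Prop pair of defn} gives \(\overline{v}_{z,\g}=\overline{v}_{a_i,\g}=w^{(i)}\), and this restricts to \(w\) on \(K(X)\). Hence \(z\in\mathcal{S}_w(f)\), with the same multiplicity. Conversely, let \(z\in\mathcal{S}_w(f)\). Then \(\overline{v}_{z,\g}\) is a valuation on \(\overline{K}(X)\) extending \(\overline{v}\) (as every \(\overline{v}_{z,\g}\) does) and restricting to \(w\). It is therefore one of the common extensions, say \(w^{(i)}\). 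Since \((a_i,\g)\) is a pair of definition for \(w^{(i)}\), Proposition \ref{Prop pair of defn} again yields \(\overline{v}(a_i-z)\geq\g\), i.e. \(z\in\mathcal{S}^{(i)}_w(f)\).

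\emph{Disjointness.} Suppose \(z\) lies in both \(\mathcal{S}^{(i)}_w(f)\) and \(\mathcal{S}^{(j)}_w(f)\). Then \(\overline{v}(a_i-a_j)\geq\min\{\overline{v}(a_i-z),\overline{v}(z-a_j)\}\geq\g\). By Proposition \ref{Prop pair of defn} this forces \(w^{(i)}=w^{(j)}\), so \(i=j\).

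\emph{Counting.} By definition \(j_w(f)\) counts the roots of \(f\) within \(\g\) of the center of a minimal pair of definition of a fixed common extension. The invariance theorem of \cite{Dutta2024Invariant} says this count does not depend on which common extension \(w^{(i)}\) or which minimal pair \((a_i,\g)\) is used. Hence \(|\mathcal{S}^{(i)}_w(f)|=j_w(f)\) for every \(i\). Summing over the \(t\) pieces gives \(|\mathcal{S}_w(f)|=t\,j_w(f)\).

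The only substantive input is this invariance of \(j_w\) across the different common extensions. If one wanted to avoid citing it, the substitute would be as follows. Each \(w^{(i)}\) is \(\overline{w}\circ\s\) for some \(\s\in G\), since the extensions of \(w\) from \(K(X)\) to \(\overline{K}(X)\) are \(G\)-conjugate. The element \(\s\) maps the roots of \(f\in K[X]\) bijectively onto themselves, preserving multiplicities, and carries the \(\g\)-ball around one center onto the \(\g\)-ball around a conjugate center. This transports the count for one extension to the count for another, and is where care is needed.
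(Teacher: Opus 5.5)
Your proof is correct and is essentially the paper's argument. The paper also writes \(\mathcal{S}_w(f)\) as the disjoint union of the \(\mathcal{S}^{(i)}_w(f)\) and gets \(\left|\mathcal{S}^{(i)}_w(f)\right| = j_w(f)\) for each \(i\) from the invariance of \(j_w\) in \cite{Dutta2024Invariant}; you additionally write out the routine checks via Proposition \ref{Prop pair of defn} that the paper leaves as observations, and your optional conjugation substitute also goes through, since comparing restrictions to \(\overline{K}\) shows the conjugating \(\s\) lies in \(G^d\).
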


\subsection{Key polynomials} We now assume that \(w\) is an \textit{arbitrary} extension of \(v\) to \(K(X)\), and take a common extension \(\overline{w}\) of \(w\) and \(\overline{v}\) to \(\overline{K}(X)\). For any polynomial \(f(X)\in K[X]\), define 
\[ \d_w(f):= \max \left\{ \overline{w}(X-z) \mid z\in \mathcal{R}(f) \right\}. \]
It has been observed in \cite[Remark 3.2]{Novacoski2019} that the value \(\d_w(f)\) is independent of the choice of the extension of \(w\).

\begin{Definition}
	A \textit{monic} polynomial \(Q(X) \in K[X]\) is said to be a \textbf{key polynomial} for \((K(X)|K,w)\) if for any polynomial \(f(X) \in K[X]\), we have 
	\[ \deg f < \deg Q \Longrightarrow \d_w(f) < \d_w(Q).  \]
\end{Definition}
Thus key polynomials are irreducible over \(K\). Moreover, every monic linear polynomial is a key polynomial. 

\pars Given any monic irreducible polynomial \(Q\) over \(K\), we define the \textbf{\(Q\)-truncation} of \(w\) as follows: for any polynomial \(f\) over \(K\), consider its \(Q\)-expansion (\ref{eqn Q-expansion}). Then,
\[ w_Q f:= \min \left\{ w(f_iQ^i) \right\}.  \] 
A sufficient condition for \(w_Q\) to be a valuation on \(K(X)\) is that \(Q\) be a key polynomial for \((K(X)|K,w)\) \cite[Proposition 2.6]{NovacoskiSpivakovsky2018}, but it is not a necessary condition \cite[Proposition 2.3]{Novacoski2019}. Employing this notion of \(Q\)-truncation, the connection between minimal pairs and key polynomials has been explored in \cite[Theorem 1.1]{Novacoski2019}: 

\begin{Theorem}\label{Thm key pols min pairs}
	Take an extension \(\overline{w}\) of \(w\) to \(\overline{K}(X)\). Let \(Q(X) \in K[X]\) be a monic irreducible polynomial and choose a root \(a\in\mathcal{R}(Q)\) such that \(\overline{w}(X-a) = \d_w(Q)\). Then the following are equivalent:
	\begin{enumerate}[label=(\roman*)]
		\item \(Q\) is a key polynomial for \((K(X)|K,w)\),
		\item \( (a,\d_w(Q)) \) is a minimal pair of definition for \( (K(X)|K, w_Q) \).
	\end{enumerate}
\end{Theorem}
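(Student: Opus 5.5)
The plan is to compare the original extension \(\overline{w}\) with the monomial valuation \(\mu:=\overline{v}_{a,\d}\), where \(\d:=\d_w(Q)=\overline{w}(X-a)\). Two elementary comparisons drive both directions.

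First, take \(z\in\overline{K}\) with \(\overline{v}(a-z)<\d\). By the ultrametric inequality applied to \(X-z=(X-a)+(a-z)\), we get \(\overline{w}(X-z)=\overline{v}(a-z)=\mu(X-z)\).

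Second, take a \(K\)-conjugate \(a'\) of \(a\). The maximality of \(\d\) among the values \(\overline{w}(X-a')\) gives
\[ \overline{w}(X-a')=\min\{\d,\overline{v}(a-a')\}=\mu(X-a'). \]
To see this, split into three cases. If \(\overline{v}(a-a')<\d\), use the ultrametric inequality. If \(\overline{v}(a-a')>\d\), the value is \(\d\). If \(\overline{v}(a-a')=\d\), we have \(\ge\d\) from the ultrametric inequality and \(\le\d\) from maximality. Taking products over the roots of \(Q\), we get \(\mu(Q)=wQ\).

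\textbf{(ii)\(\Rightarrow\)(i).} Let \(f\in K[X]\) with \(\deg f<\deg Q\), and let \(z\) be a root of \(f\). Then \(\deg_K z<\deg_K a\), so minimality of \((a,\d)\) forces \(\overline{v}(a-z)<\d\). By the first comparison, \(\overline{w}(X-z)=\overline{v}(a-z)<\d\). Taking the maximum over the roots of \(f\) gives \(\d_w(f)<\d=\d_w(Q)\). Hence \(Q\) is a key polynomial.

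\textbf{(i)\(\Rightarrow\)(ii).} I would proceed in three steps.

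\emph{Minimality of \((a,\d)\) for \(\mu|_{K(X)}\).} Suppose \(b\in\overline{K}\) satisfies \(\overline{v}(a-b)\ge\d\) and \(\deg_K b<\deg Q\), and let \(g\) be its minimal polynomial. Then \(\overline{w}(X-b)\ge\min\{\overline{w}(X-a),\overline{v}(a-b)\}=\d\), so \(\d_w(g)\ge\d_w(Q)\). This contradicts the key polynomial property. So \((a,\d)\) is a minimal pair of definition for \(\mu|_{K(X)}\).

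\emph{Agreement on the expansion pieces.} By Theorem \ref{Thm w = v_Q} applied to \(\mu|_{K(X)}\), for \(f=\sum f_iQ^i\) we have \(\mu f=\min\{\mu(f_i)+i\mu(Q)\}\). Now compare with \(w_Q\). We already know \(\mu(Q)=wQ\). Each root \(z\) of \(f_i\) has degree \(<\deg Q\), so \(\overline{v}(a-z)<\d\) by the minimality just proved. The first comparison then gives \(\mu(f_i)=wf_i=vf_i(a)\), by multiplying over the roots and the leading coefficient.

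\emph{Conclusion.} Therefore \(\mu f=\min\{w(f_iQ^i)\}=w_Qf\). So \(\mu|_{K(X)}=w_Q\), and \((a,\d)\) is a minimal pair of definition for \(w_Q\).

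The main obstacle I anticipate is exactly the identification \(\mu|_{K(X)}=w_Q\). A priori, \(\mu\) of a \(Q\)-expansion could exceed the minimum of \(\mu\) of its terms because of cancellation. The plan avoids this by proving minimality of \((a,\d)\) for \(\mu|_{K(X)}\) first, before any comparison with \(w_Q\). That ordering lets Theorem \ref{Thm w = v_Q} rule out cancellation. It also means we never need to know independently that \(w_Q\) is a valuation.
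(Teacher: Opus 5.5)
The paper does not prove this statement. It quotes it from \cite[Theorem 1.1]{Novacoski2019} and uses it as a black box, so there is no in-paper argument to compare yours with. Your proposal is a correct, self-contained proof that uses only the root-theoretic definition of $\d_w$ and Theorem~\ref{Thm w = v_Q}.

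\begin{itemize}
\item Your two comparisons of $\overline{w}$ with $\mu=\overline{v}_{a,\d}$ on linear factors are right. The boundary case $\overline{v}(a-a')=\d$ is exactly where the maximality of $\d$ is needed, and you use it there.
\item (ii)$\Rightarrow$(i) uses only the minimality clause of (ii). It does not need to know that $w_Q$ is a valuation.
\item In (i)$\Rightarrow$(ii), proving minimality of $(a,\d)$ for $\mu|_{K(X)}$ before invoking Theorem~\ref{Thm w = v_Q} is the correct order. That theorem applies because $\overline{v}_{a,\d}$ is always valuation transcendental: residue transcendental if $\d\in\overline{v}\overline{K}$, value transcendental otherwise. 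The paper states the theorem for arbitrary valuation transcendental extensions.
\end{itemize}

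State explicitly that $\overline{w}$ is a common extension of $w$ and $\overline{v}$. This is the paper's standing convention, and you use $\overline{w}(a-z)=\overline{v}(a-z)$ throughout.

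Your route identifies $w_Q=\overline{v}_{a,\d}|_{K[X]}$ directly. So for a key polynomial $Q$ it reproves that $w_Q$ is a valuation, without appealing to \cite[Proposition 2.6]{NovacoskiSpivakovsky2018}. The price is that all control of cancellation in $Q$-expansions is delegated to the Alexandru--Popescu--Zaharescu expansion theorem.
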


The following result gives a criterion for the value of a polynomial to coincide with its truncated value:

\begin{Proposition}\cite[Proposition 2.4]{Dutta2023MathNach}\label{Prop wf = w_Q f}
	Let \(Q\) be a key polynomial for \((K(X)|K,w)\). Then for any polynomial \( f \in K[X] \),
	\[  w f  = w_Q f \text{ if and only if } \d_w(f) \leq \d_w (Q).  \]
\end{Proposition}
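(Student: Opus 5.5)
The plan is to prove both directions of the equivalence in Proposition \ref{Prop wf = w_Q f} by comparing \(wf\) with the terms of the \(Q\)-expansion (\ref{eqn Q-expansion}), \(f=\sum f_iQ^i\). Fix a common extension \(\overline{w}\) of \(w\) and \(\overline{v}\) to \(\overline{K}(X)\), and a root \(a\) of \(Q\) with \(\overline{w}(X-a)=\d:=\d_w(Q)\). By Theorem \ref{Thm key pols min pairs}, \((a,\d)\) is a minimal pair of definition for \(w_Q\). Hence \(w_Q\) restricts from \(\overline{v}_{a,\d}\), and Theorem \ref{Thm w = v_Q} gives \(w_Qf=\min\{vf_i(a)+iw_QQ\}\).

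The first step is a comparison at the level of \(\overline{K}(X)\). For every \(z\in\overline{K}\) we have \(\overline{w}(X-z)\ge\min\{\d,\overline{v}(a-z)\}=\overline{v}_{a,\d}(X-z)\), with equality whenever \(\overline{w}(X-z)<\d\), or equivalently whenever \(\overline{v}(a-z)<\d\). Factoring \(f=c\prod(X-z)\) over \(\overline{K}\) gives \(wf\ge w_Qf\) for all \(f\), with equality exactly when \(\overline{w}(X-z)=\min\{\d,\overline{v}(a-z)\}\) for every root \(z\) of \(f\).

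For the direction \((\Leftarrow)\), assume \(\d_w(f)\le\d\). Then every root \(z\) of \(f\) satisfies \(\overline{w}(X-z)\le\d\). Suppose some root has \(\overline{w}(X-z)>\overline{v}_{a,\d}(X-z)\). Then \(\overline{v}(a-z)<\d\), so by the ultrametric inequality \(\overline{w}(X-z)=\min\{\overline{w}(X-a),\overline{v}(a-z)\}=\overline{v}(a-z)\), which is a contradiction. The case \(\overline{v}(a-z)\ge\d\) gives \(\overline{w}(X-z)\ge\d\), hence equality with \(\d=\overline{v}_{a,\d}(X-z)\). So every root realizes equality, and therefore \(wf=w_Qf\).

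For the direction \((\Rightarrow)\), suppose \(\d_w(f)>\d\). Then some root \(z\) has \(\overline{w}(X-z)>\d\ge\overline{v}_{a,\d}(X-z)\), so the root-wise comparison is strict. Since every factor satisfies \(\ge\), this single strict inequality forces \(wf>w_Qf\). The main subtlety is that this works only because \(w_Q=\overline{v}_{a,\d}|_{K(X)}\) is computed root-wise with the \emph{same} extension \(\overline{w}\) and root \(a\). That identification comes from Theorem \ref{Thm key pols min pairs}, and should be checked carefully: the value of \(w_Q\) on \(K[X]\) is given by the monomial valuation \(\overline{v}_{a,\d}\), which Theorem \ref{Thm w = v_Q} guarantees.
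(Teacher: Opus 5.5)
Your proof is correct. The paper does not prove this statement; it imports it from \cite[Proposition 2.4]{Dutta2023MathNach}. So what you have is a derivation from another result the paper already quotes, Theorem \ref{Thm key pols min pairs}, and the two routes cannot be compared step by step.

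Your route rests on one identification: \(w_Q=\overline{v}_{a,\d}|_{K(X)}\) for the root \(a\) with \(\overline{w}(X-a)=\d\), taken relative to the same \(\overline{w}\). This is exactly what ``\((a,\d)\) is a pair of definition for \(w_Q\)'' means, so Theorem \ref{Thm key pols min pairs} alone provides it. Contrary to your closing remark, Theorem \ref{Thm w = v_Q} is never needed. Once that identification is in place, everything reduces to the root-wise inequality \(\overline{w}(X-z)\geq\min\{\d,\overline{v}(a-z)\}=\overline{v}_{a,\d}(X-z)\). Summing over the roots of \(f\) also gives \(wf\geq w_Qf\) for every \(f\in K[X]\), which the statement itself does not record.

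You can shorten the case analysis.
\begin{itemize}
\item Equality in the root-wise inequality holds whenever \(\overline{v}(a-z)\neq\d\), not only when \(\overline{v}(a-z)<\d\) as your first paragraph suggests.
\item The inequality is strict exactly when \(\overline{w}(X-z)>\d\), which forces \(\overline{v}(a-z)=\d\).
\end{itemize}
Hence \(wf=w_Qf\) if and only if no root \(z\) of \(f\) has \(\overline{w}(X-z)>\d\), which is precisely \(\d_w(f)\leq\d\). This gives both directions at once.

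If you keep your \((\Leftarrow)\) argument as written, the step ``Then \(\overline{v}(a-z)<\d\)'' needs a one-line justification: \(\overline{v}(a-z)\geq\d\) would give \(\overline{v}_{a,\d}(X-z)=\d\geq\overline{w}(X-z)\), contradicting the assumed strict inequality.
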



\subsection{Complete sequence of key polynomials}

\begin{Definition}
	A family \(\left\{Q_i\right\}_{i\in\L}\) of key polynomials for \((K(X)|K,w)\) is said to form a \textbf{complete sequence of key polynomials} if it satisfies the following properties: 
	\sn (CSKP1) \( \d_w(Q_i) \neq \d_w(Q_{i'}) \) for all \( i, i' \in \L \) with \(i\neq i'\), 
	\n (CSKP2) \(\L\) is well ordered with respect to the ordering given by \(i < i'\) if and only if \(\d_w(Q_i) < \d_w(Q_{i'})\), 
	\n (CSKP3) for any \(f(X)\in K[X]\), there exists some \(i\in\L\) such that \(\deg Q_i \leq \deg f\) and \(wf = w_{Q_i} f\). 
\end{Definition}

\begin{Remark}\label{Rmk CSKP}
	It has been observed in \cite[Theorem 1.1]{NovacoskiSpivakovsky2018} that every valued field extension \( (K(X)|K,w) \) admits a complete sequence of key polynomials. Moreover, we can assume that a complete sequence \(\left\{Q_i\right\}_{i\in\L}\) satisfies the following properties: \smallskip
	\begin{enumerate}
		\item \( \L = \Union_{j\in I} \L_j \) where \(I\) is an initial segment of \(\NN\). \smallskip
		\item For each \(j\in I\), we have \( \L_j = \{j\} \union \vartheta_j \), where \(\vartheta_j\) is an ordered set without a maximal element, which may also be empty. \smallskip
		\item \(Q_0 = X\).\smallskip 
		\item For each \( j\in I\setminus \{0\} \) and \( i\in \vartheta_{j-1} \), we have \( j-1 < i < j \).\smallskip
		\item \( \deg Q_i = \deg Q_{i'} \) for all \( i, i' \in \L_j \). \smallskip
		\item \( \deg Q_i < \deg Q_{i'} \) for all \( i\in\L_j \) and \( i'\in\L_{j+1} \).\smallskip
		\item \( w Q_i < w Q_{i'} \) and \( \d_w(Q_i) < \d_w(Q_{i'}) \) for all \( i < i' \in \L \).
	\end{enumerate}
\end{Remark}

Even though the set of key polynomials \(\left\{Q_i\right\}_{i\in\L}\) is not unique, the cardinality of the indexing set \(I\), the degrees of the key polynomials \(\deg Q_i\) and the truncated valuations \(w_{Q_i}\) are uniquely determined by \(w\). 

\begin{Definition}
	In the framework of Remark \ref{Rmk CSKP}, we say that \(Q_j\) is a \textbf{limit key polynomial} for \((K(X)|K,w)\) if the following conditions are satisfied:
	\[ j \in I\setminus\{0\} \text{ and } \vartheta_{j-1} \neq \emptyset.  \]
\end{Definition}

\begin{Remark}\label{Rmk regular non-limit key pols}
	If \( j\in\L \) is a non-maximal index such that \( Q_{j+1} \) is not a limit key polynomial (that is, \( \vartheta_j = \emptyset \)), then it follows from \cite[Theorem 26]{DecaupMahboubSpivakovsky2018} that \( Q_{j+1} \) is a key polynomial for \( (K(X)|K, w_{Q_j}) \) in the sense of MacLane-Vaqui\'{e}. Therefore, upon considering the \(Q_j\)-expansion of \(Q_{j+1}\),
	\[ Q_{j+1} = q_n Q_j^n + \dotsc + q_0,  \]
	we conclude the following from \cite[Remark 3.8]{Dutta2024Invariant}:
	\begin{align*}
		q_n &= 1, \\
		w_{Q_j} (Q_{j+1}) &= n w Q_j \leq w q_i + i w Q_j \text{ for all } i < n. 
	\end{align*}
\end{Remark}

This motivates the following definitions: 

\begin{Definition}\label{Defn regular lim KP}
	Let \(j\in\L\) be such that \(Q_{j+1}\) is a limit key polynomial. We say that \(Q_{j+1}\) is a \textbf{regular limit key polynomial} if there exists \( i_0 \in \vartheta_j \) such that for all \( i \geq i_0 \), the \(Q_i\)-expansion of \(Q_{j+1}\) has the form 
	\[ Q_{j+1} = Q_i^n + q_{n-1} Q_i^{n-1} + \dotsc + q_1 Q_i + q_0,   \]
	and 
	\[ w_{Q_i}(Q_{j+1}) = n w Q_i \leq w q_t + t w Q_i \text{ for all } t < n.  \]
\end{Definition}

\begin{Definition}\label{Defn regular CSKP}
	The complete set of key polynomials \( \{Q_j\}_{j\in\L} \) is said to be \textbf{regular} if every non-limit key polynomial satisfies the conclusions of Remark \ref{Rmk regular non-limit key pols} and if every limit key polynomial is regular.
\end{Definition}

\begin{Remark}
	Both regular and non-regular limit key polynomials may exist. See \cite[Example 5.2.1]{Mahboub2013} for an example when the limit key polynomial is regular, and \cite[Example 5.3.1]{Mahboub2013} for a complementary example.
\end{Remark}

\begin{Remark}
	We now fix some notations that will be used in the sequel. Consider the setup of Remark \ref{Rmk CSKP}. For a non-maximal index \(i\in \L\), we define
	\[ i^+ := \begin{cases}
		i + 1, &\quad \text{ if \( i \in I \) and \( i + 1 \in I \)},\\
		j + 1, &\quad \text{ if \( i \in \vartheta_j \) and \( j + 1 \in I \)}.
	\end{cases}  \]
Observe that \(Q_{i^+}\) is a limit key polynomial in the latter scenario. 

\pars For \( i, i' \in \L \), we write \( i \equiv i' \) if \( \deg Q_i = \deg Q_{i'} \). Thus one of the following conditions hold:
\begin{align*}
	i \in I      &\text{ and } i' \in \vartheta_i,\\
	i' \in I     &\text{ and } i \in \vartheta_{i'},\\
	\text{there exists } j\in I &\text{ such that } i,i' \in \vartheta_j.
\end{align*}
For \( i < i' \in \L \), we will write 
\[ i' \sim i \text{ if } i' = i^+ \text{ or } i' \equiv i.   \]
\end{Remark}


\section{Proof of Theorem \ref{Thm restriction and IC_K(w) and K^h}}

\begin{Proposition}\label{Prop restriction iff a, gamma = sigma b, gamma}
	Take $a,b \in \overline{K}$ and $\g$ in some ordered abelian group containing $\overline{v}\overline{K}$. Then the following are equivalent:
	\sn (i) $\overline{v}_{a,\g}|_{K(X)} = \overline{v}_{b,\g}|_{K(X)}$,
	\sn (ii) $\overline{v}_{a,\g} = \overline{v}_{\s b, \g}$ for some $\s\in G^d$.
\end{Proposition}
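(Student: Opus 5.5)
The easy direction is (ii) $\Rightarrow$ (i). Take $\s\in G^d$ with $\overline{v}_{a,\g} = \overline{v}_{\s b,\g}$. It suffices to show that $\overline{v}_{\s b,\g}$ and $\overline{v}_{b,\g}$ agree on $K(X)$. Extend $\s$ to $\overline{K}(X)$ by fixing $X$, so that $\s$ acts trivially on $K(X)$. Since $\s\in G^d$, we have $\overline{v}\circ\s = \overline{v}$ on $\overline{K}$. For $f\in\overline{K}[X]$, write the Taylor expansion $f=\sum c_k (X-b)^k$. Then $\s f = \sum \s(c_k)(X-\s b)^k$, and hence
\[ \overline{v}_{\s b,\g}(\s f) = \min\{\overline{v}\s(c_k) + k\g\} = \min\{\overline{v}c_k+k\g\} = \overline{v}_{b,\g}(f). \]
Thus $\overline{v}_{\s b,\g}\circ\s = \overline{v}_{b,\g}$. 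Restricting to $K(X)$, where $\s$ is the identity, gives $\overline{v}_{\s b,\g}|_{K(X)} = \overline{v}_{b,\g}|_{K(X)}$, which is (i).

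For (i) $\Rightarrow$ (ii), set $w:=\overline{v}_{a,\g}|_{K(X)}$. Both $\overline{v}_{a,\g}$ and $\overline{v}_{b,\g}$ are extensions of $w$ to $\overline{K}(X)$; the first restricts to $\overline{v}$ on $\overline{K}$, and so does the second. We use the classical conjugacy theorem for extensions of a valuation in a normal algebraic extension. The extension $\overline{K}(X)|K(X)$ is Galois with group $G$, so there is $\tau\in G$ with
\[ \overline{v}_{a,\g} = \overline{v}_{b,\g}\circ\tau^{-1}. \]
Restricting this identity to $\overline{K}$ gives $\overline{v} = \overline{v}\circ\tau^{-1}$ on $\overline{K}$, so $\tau\in G^d$. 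Applying the computation from the first paragraph to $\tau$ shows $\overline{v}_{b,\g}\circ\tau^{-1} = \overline{v}_{\tau b,\g}$. Hence $\overline{v}_{a,\g} = \overline{v}_{\tau b,\g}$, which is (ii) with $\s=\tau$.

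The step needing care is the identity $\overline{v}_{b,\g}\circ\s^{-1} = \overline{v}_{\s b,\g}$ for $\s\in G^d$. It is valid for arbitrary $\g$ in an ordered group containing $\overline{v}\overline{K}$, since the monomial valuation is defined coefficientwise via Taylor expansion. The other point to keep in view is that the conjugacy theorem only produces some $\tau\in G$. That $\tau$ lies in $G^d$ then follows from the fact that both extensions restrict to the same $\overline{v}$ on $\overline{K}$. I expect no genuine obstacle; the only subtlety is that $\gamma$ may lie outside $\overline{v}\overline{K}$, which the Taylor-expansion computation handles uniformly.
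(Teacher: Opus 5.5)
Your proof is correct, and it takes a more direct route than the paper in the direction (i)$\Rightarrow$(ii). The paper also starts from the conjugacy theorem, writing $\overline{v}_{a,\g} = \overline{v}_{b,\g}\circ\tau$, but it proves $\tau\in G^d$ indirectly. It first shows $\overline{v}(a-\tau^{-1}b)\geq\g$. Comparing values on the linear polynomials $X-z$, it deduces $\min\{\g,\overline{v}z\} = \min\{\g,\overline{v}\tau z\}$ for all $z\in\overline{K}$. It then passes from ``$\overline{v}z=\overline{v}\tau z$ whenever $\overline{v}z<\g$'' to all $z$ by multiplying by an auxiliary $z_2$ of sufficiently small value. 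This forces a separate treatment of the degenerate case where $\g<\overline{v}z$ for all $z\in\overline{K}$. You instead note that both monomial valuations extend $\overline{v}$. Restricting the conjugacy identity to the constants $\overline{K}$ then gives $\overline{v}\circ\tau^{-1}=\overline{v}$ at once, with no case distinction. For (ii)$\Rightarrow$(i) the two arguments agree in substance. The paper checks $\overline{v}_{\s b,\g}=\overline{v}_{b,\g}\circ\s^{-1}$ only on the polynomials $X-z$, implicitly using that $\overline{K}$ is algebraically closed and that both sides agree on $\overline{K}$. Your Taylor-expansion computation verifies the identity on all of $\overline{K}[X]$ directly. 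The paper's argument additionally shows that agreement of the truncated values $\min\{\g,\overline{v}z\}$ alone forces $\tau\in G^d$, but the proposition does not need this. Your version is shorter, self-contained, and avoids the case split.
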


\begin{proof}
	If \( \g < \overline{v}z \) for all \( z\in\overline{K} \), then \( \overline{v}_{a,\g}  = \overline{v}_{b,\g} \) for all \( a,b \in \overline{K} \) by Proposition \ref{Prop pair of defn}. Hence both (i) and (ii) hold trivially. We now consider the complementary case. In particular, we can assume that
	\begin{equation}\label{eqn gamma' < gamma}
		\text{there exists } \g'\in\overline{v}\overline{K} \text{ such that } \g' < \g.
	\end{equation}
	
	\pars We first show the direction $(ii)\Longrightarrow (i)$. Employing the fact that $\s\in G^d$, we observe that for any $z\in\overline{K}$, we have
	\begin{align*}
		\overline{v}_{\s b,\g}(X-z) &= \min\{ \g, \overline{v}(\s b-z) \}\\
		&= \min\{ \g, \overline{v}(b - \s^{-1}z) \}\\
		&= \overline{v}_{b,\g} (X- \s^{-1}z) \\
		&= (\overline{v}_{b,\g} \circ \s^{-1}) (X-z). 
	\end{align*}
Thus 
\[ \overline{v}_{\s b,\g} = \overline{v}_{b,\g} \circ \s^{-1}. \]
Since $\s^{-1}\in \Gal(\overline{K}(X)|K(X))$, we observe that $\overline{v}_{\s b , \g}$ and $\overline{v}_{b,\g}$ are conjugate valuations and hence
\[  \overline{v}_{a,\g}|_{K(X)} = \overline{v}_{\s b, \g}|_{K(X)} = \overline{v}_{b,\g}|_{K(X)}. \]

\pars We now show $(i)\Longrightarrow (ii)$. The condition $\overline{v}_{a,\g}|_{K(X)} = \overline{v}_{b,\g}|_{K(X)}$ implies that $\overline{v}_{a,\g} = \overline{v}_{b,\g}\circ \tau$ for some $\tau\in G$. Then
\[ \overline{v}_{a,\g}(X-\tau^{-1}b) = \overline{v}_{b,\g}(X-b) = \g, \]
whence $\overline{v}(a-\tau^{-1}b)\geq\g$. It follows that
\[ \overline{v}_{\tau^{-1}b,\g}= \overline{v}_{a,\g} = \overline{v}_{b,\g}\circ \tau. \]
As a consequence, we obtain that
\begin{equation*}
	\min\{ \g, \overline{v} (\tau^{-1}b-z) \} = \min\{ \g, \overline{v}(b -\tau z) \} \text{ for all } z\in\overline{K}.
\end{equation*}
Since $\tau$ and $b$ are fixed, $\tau^{-1}b-z$ runs over entire $\overline{K}$ as $z$ runs over all $\overline{K}$. The above expression can thus be rewritten as 
\begin{equation*}
	\min\{ \g, \overline{v}z \} = \min\{ \g, \overline{v}\tau z \} \text{ for all } z\in\overline{K}.
\end{equation*}
Therefore,
\begin{equation}\label{eqn vz = v tau z}
	\overline{v}z < \g \Longrightarrow \overline{v} z = \overline{v} \tau z.
\end{equation}
If $\g > \overline{v}z$ for all \( z \in \overline{K} \), then this implies that $\overline{v}z = \overline{v}\tau z$ for all $z\in\overline{K}$, whence $\tau\in G^d$. We now consider the remaining case. Suppose if possible that $\tau\notin G^d$. Then there exists $z_1\in\overline{K}$ such that $\overline{v}z_1 \neq \overline{v}\tau z_1$. In light of (\ref{eqn gamma' < gamma}), we can choose \( z_2 \in \overline{K} \) such that 
\[ \overline{v} z_2 < \g \text{ and } \overline{v} (z_1 z_2) < \g.  \]
From (\ref{eqn vz = v tau z}) and our choice of \(z_1\), we then obtain
\[ \overline{v} (z_1 z_2) \neq \overline{v} \tau (z_1 z_2).  \]
However, this contradicts (\ref{eqn vz = v tau z}). It follows that $\tau\in G^d$. Setting $\s = \tau^{-1}$ we have the assertion. 
\end{proof}

\pars For the remainder of this section, we assume that \((K(X)|K,w)\) is a valuation transcendental extension. Let \( \overline{w} \) be a common extension of \(w\) and \(\overline{v}\) to \(\overline{K}(X)\). We fix a minimal pair of definition $(a,\g)$ for $(K(X)|K,w)$ and an extension of $\overline{w}$ to $\overline{K(X)}$. Thus we can talk of the implicit constant field
\[ IC_K(w) := \overline{K}\sect K(X)^h. \] 
Set 
\[  G^d_X:= G^d (\overline{K(X)}|K(X),\overline{w}) = \{ \s\in \Gal(\overline{K(X)}|K(X)) \mid \overline{w}\circ\s = \overline{w} \text{ on } \overline{K(X)} \}.\]
Then $IC_K(w)$ is the fixed field of the subgroup $G_{\res}$ of $G$, where
\begin{equation}\label{eqn G_res and G_d^X}
	G_{\res}:= \left\{ \s|_{K^\sep} \mid \s\in G^d_X \right\}.
\end{equation}
Observe that $K^h$ is a subfield of $IC(K(X)|K,w)$. As a consequence, 
\[ G_{\res} \subseteq G^d. \]
We now provide a \textit{proof of Theorem \ref{Thm restriction and IC_K(w) and K^h}}.

\begin{proof}
	Consider the multisets:
	\begin{align*}
		\mathcal{S} &:= \left\{ K \text{-conjugates \(a'\) of \(a\) such that } \overline{v}(a-a') \geq \g  \right\},\\
		\mathcal{I} &:= \left\{ IC_K(w) \text{-conjugates of } a  \right\},
	\end{align*}
where conjugates are counted with multiplicities. Take \( a' \in \mathcal{I} \). Then \( a' = \s a \) for some \( \s\in G_{\res} \). It follows from (\ref{eqn G_res and G_d^X}) that 
\[ \overline{w} (X-a') = \left( \overline{w}\circ\s \right)(X-a) = \overline{w}(X-a) = \g.  \]
As a consequence, \( \overline{v}(a-a') \geq \g \) and hence 
\[ \mathcal{I} \text{ is a submultiset of } \mathcal{S}. \]
Observe that 
\[ \left|\mathcal{S}\right| = j_w(Q) \]
where \(Q\) is the minimal polynomial of \(a\) over \(K\). On the other hand, since \(IC_K(w) \subseteq K(a)^h\) by \cite[Lemma 5.1]{Dutta2021}, we have 
\[ \left|\mathcal{I}\right| = \left[K(a)^h:IC_K(w)\right]. \]
It now follows from Proposition \ref{Prop j(Q) = [K(a)^h:IC_K(w)]} that the cardinalities of the multisets \(\mathcal{S}\) and \(\mathcal{I}\) coincide. Since \( \mathcal{I}\) is a submultiset of \( \mathcal{S} \), we conclude that
\[ \mathcal{I} = \mathcal{S}.  \]
We have thus proved the first assertion. 

\pars We now prove the second assertion. First assume that $w = \overline{v}_{a,\g}|_{K(X)} = \overline{v}_{a',\g}|_{K(X)}$. It follows from Proposition \ref{Prop restriction iff a, gamma = sigma b, gamma} that $\overline{v}(a-\s a') \geq\g$ for some $\s\in G^d$, whence $\s a' = \tau a$ for some $\tau\in G_\res$ by part (i). Since $G_\res$ is a subgroup of $G^d$, we conclude that $a'$ is a $K^h$-conjugate of $a$. The reverse direction is a direct application of Proposition \ref{Prop restriction iff a, gamma = sigma b, gamma}.
\end{proof}

\pars If $a\in\overline{K}$ is such that the extension $(K(a)|K,\overline{v})$ is unibranched, then it has been observed in \cite[Lemma 2.1]{BlaszczokKuhlmann2017} that $[K(a):K] = [K^h(a):K^h]$. As a consequence, every $K$-conjugate of $a$ is also a $K^h$-conjugate of $a$. The following is now immediate from Theorem \ref{Thm restriction and IC_K(w) and K^h}:

\begin{Corollary}\label{Coro res = w unibranched}
	Assume that the extension $(K(a)|K,\overline{v})$ is unibranched. Then 
	\[ \overline{v}_{a',\g}|_{K(X)} = w \text{ for all conjugates } a' \text{ of $a$ over }K.  \] 
	In particular, this assertion holds for all $a\in\overline{K}$ whenever $(K,v)$ is henselian.
\end{Corollary}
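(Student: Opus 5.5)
The plan is to derive the corollary directly from part (ii) of Theorem \ref{Thm restriction and IC_K(w) and K^h}, once we know that every $K$-conjugate of $a$ is a $K^h$-conjugate. Here $(a,\g)$ is the fixed minimal pair of definition for $(K(X)|K,w)$. Let $Q$ be the minimal polynomial of $a$ over $K$.

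First I would invoke \cite[Lemma 2.1]{BlaszczokKuhlmann2017}: since $(K(a)|K,\overline{v})$ is unibranched, $[K(a):K]=[K^h(a):K^h]$. Hence the minimal polynomial of $a$ over $K^h$ has degree $\deg Q$ and divides $Q$, so it equals $Q$. Therefore $Q$ stays irreducible over $K^h$, and its roots, counted with multiplicity, are exactly the $K^h$-conjugates of $a$. In other words, each $K$-conjugate $a'$ of $a$ has the form $a'=\s a$ with $\s\in\Gal(\overline{K}|K^h)$.

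Next I would apply part (ii) of Theorem \ref{Thm restriction and IC_K(w) and K^h} to such an $a'$. It gives $\overline{v}_{a',\g}|_{K(X)}=w=\overline{v}_{a,\g}|_{K(X)}$, which is the claimed statement.

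For the final assertion, suppose $(K,v)$ is henselian. Then $\overline{v}$ is the unique extension of $v$ to $\overline{K}$, so every algebraic extension $(K(a)|K,\overline{v})$ is unibranched and the first part applies to every $a\in\overline{K}$. Alternatively, $K^h=K$ in this case, so $K$-conjugates and $K^h$-conjugates coincide trivially.

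I do not expect a real obstacle. The only point needing care is that the correspondence between $K^h$-conjugates and roots of $Q$ is a matter of multisets, and the unibranched hypothesis, via the degree equality above, settles it.
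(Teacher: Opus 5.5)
Your proposal is correct and follows essentially the same route as the paper. The paper also uses \cite[Lemma 2.1]{BlaszczokKuhlmann2017} to get $[K(a):K]=[K^h(a):K^h]$, concludes that every $K$-conjugate of $a$ is a $K^h$-conjugate, and then applies part (ii) of Theorem \ref{Thm restriction and IC_K(w) and K^h}; your handling of the henselian case (uniqueness of the extension of $v$, or simply $K^h=K$) matches as well.
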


\pars A particularly important case of Corollary \ref{Coro res = w unibranched} is mentioned underneath. We first make a definition: a valued field $(K,v)$ is said to be \textbf{dense} in an extension $(K', v')$ if for any $\d\in v' K'$ and $a'\in K'$, there exists $a\in K$ such that $v'(a-a') > \d$. 

\begin{Corollary}\label{Coro res = w dense in K^h}
	Assume that $(K,v)$ is dense in the henselization $K^h$. Then
	\[ \overline{v}_{a',\g}|_{K(X)} = w \text{ for all conjugates $a'$ of $a$ over }K.   \]
	In particular, this assertion holds whenever \((K,v)\) has rank one.
\end{Corollary}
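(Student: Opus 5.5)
The plan is to reduce Corollary \ref{Coro res = w dense in K^h} to Corollary \ref{Coro res = w unibranched}. To do so, I will show that if \((K,v)\) is dense in \(K^h\), then every algebraic \(a\) satisfies \([K^h(a):K^h] = [K(a):K]\). Equivalently, the minimal polynomial \(Q\) of \(a\) over \(K\) stays irreducible over \(K^h\), which is exactly what the proof of Corollary \ref{Coro res = w unibranched} uses to conclude that every \(K\)-conjugate of \(a\) is a \(K^h\)-conjugate. Theorem \ref{Thm restriction and IC_K(w) and K^h}(ii) then gives \(\overline{v}_{a',\g}|_{K(X)} = w\) for every conjugate \(a'\).

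\pars The key step is a density argument for irreducibility. Recall that \(K^h\) is separable algebraic over \(K\), and that density of \(K\) in \(K^h\) implies \(K\) is relatively separably algebraically closed in the completion-type sense: every element of \(K^h\) separable algebraic over \(K\) can be approximated arbitrarily well by elements of \(K\). Suppose \(Q = gh\) over \(K^h\) with \(g,h\) monic of positive degree. Consider the coefficients of \(g\). These lie in \(K^h\), and they are roots of polynomials over \(K\) determined by \(Q\), since they are symmetric functions in a subset of the roots of \(Q\).

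I would argue that \(K\) is relatively algebraically closed in \(K^h\) up to purely inseparable elements. Concretely: if \(c\in K^h\setminus K\) is separable over \(K\), pick a conjugate \(c'\ne c\) and let \(\d > \max_{c''\neq c}\overline{v}(c-c'')\). By density, choose \(b\in K\) with \(\overline{v}(b-c)>\d\). Apply an automorphism \(\s\) fixing \(K\) with \(\s c = c'\); since \(b\in K\), we have \(\s b = b\). By Krasner-type reasoning along a suitable extension of valuations, this forces \(c\in K(b)=K\), a contradiction.

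The main obstacle is that the automorphism \(\s\) need not preserve \(\overline{v}\). The cleanest route avoids this entirely by using that \(K^h|K\) is an immediate extension with \(K^h\) the decomposition field. The standard fact is that, for \(c\in K^h\), Krasner's lemma applied over the henselian field \(K^h\) gives \(K^h(b)\supseteq K^h(c)\), which is vacuous. So instead I would use \([K^h:K]\) and density as follows: density implies that \(K\) is dense in \(K^h\), hence every finite subextension \(L\subseteq K^h\) of \(K\) has a single extension of \(v\). The reason is that, in \(L\otimes_K \overline{K}\) style terms, distinct extensions of \(v\) to \(L\) would give via weak approximation an element of \(L\) that cannot be approximated by \(K\). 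But \(L\subseteq K^h\) also has the property that \(v\) has exactly one extension to \(L\) only when \(L=K\), because \(K^h\) is the decomposition field and \(L|K\) is inside it with the trivial-decomposition property giving \(e=f=1\). Thus \([L:K]=\) the number of extensions, which is \(1\), so \(K^h=K\) on finite levels, i.e.\ \(K\) is henselian.

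Hence in this case \((K,v)\) is henselian and Corollary \ref{Coro res = w unibranched} applies directly. For the rank one case, I would cite the classical fact that a rank one valued field is dense in its henselization, since \(K^h\) embeds in the completion.
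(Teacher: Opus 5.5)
Your reduction matches the paper's: show $[K^h(a):K^h]=[K(a):K]$, deduce that every $K$-conjugate of $a$ is a $K^h$-conjugate, and apply Corollary \ref{Coro res = w unibranched}. The gap is the lemma you use to get the degree equality. You claim density of $K$ in $K^h$ gives the equality for \emph{every} algebraic $a$ and even makes $K$ henselian, and this is false. Take $K=\QQ$ with the $7$-adic valuation. It has rank one, so it is dense in $K^h$. But $X^2-2$ has the simple root $3$ modulo $7$, so by Hensel's lemma $\sqrt2\in K^h\setminus K$ and $[K^h(\sqrt2):K^h]=1<2$.

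The step that fails is the claim that every finite $L\subseteq K^h$ carries a unique extension of $v$: $7$ splits in $L=\QQ(\sqrt2)\subseteq K^h$. Density refers only to the single valuation $\overline{v}|_L$. An element produced by weak approximation, behaving differently at the other extension of $v$ to $L$, is still approximable by $K$ with respect to $\overline{v}|_L$, so there is no contradiction. The Krasner paragraph is abandoned and aims at the same false conclusion. Combined with your last paragraph, your argument would show that every rank one valued field is henselian.

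What is missing is the minimality of $(a,\g)$, which you never use; the degree equality is available only for the minimal center $a$, not for arbitrary algebraic elements. Suppose $Q=Q_1Q_2$ over $K^h$ with $Q_1$ monic, $Q_1(a)=0$ and $\deg Q_1<\deg Q$. Approximate the coefficients of $Q_1$ by elements of $K$ to get a monic $g\in K[X]$ with $\deg g=\deg Q_1$ and $\overline{v}g(a)$ exceeding any prescribed element of $vK$. Since $\overline{v}g(a)=\sum_{b\in\mathcal{R}(g)}\overline{v}(a-b)$, some root $b$ of $g$ then satisfies $\overline{v}(a-b)\geq\g$ with $\deg_K b<\deg_K a$, contradicting minimality. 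This is what the paper takes from \cite[Corollary 3.2]{Dutta2023MathNach}, after noting $K^h\subseteq\widehat{K}$, before concluding unibranchedness via \cite[Lemma 2.1]{BlaszczokKuhlmann2017}.

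This approximation step needs $\g$ to lie below some element of $\overline{v}\overline{K}$. If $\g$ exceeds every element of $\overline{v}\overline{K}$, then $\overline{v}(a-b)\geq\g$ forces $b=a$, so every $a$ is a minimal center. In the $7$-adic example, choose the square root $a$ of $2$ with $\overline{v}(a-3)=1$. Then $\overline{v}_{-a,\g}(X-3)=\overline{v}(a+3)=0<1=\overline{v}_{a,\g}(X-3)$. So in that degenerate case the statement itself fails, and no argument can establish it there.
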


\begin{proof}
	$(K,v)$ being dense in $K^h$ implies that $K^h$ is a subset of the completion $\widehat{K}$ of $(K,v)$. It then follows from \cite[Corollary 3.2]{Dutta2023MathNach} that $[K(a):K] = [K^h(a):K^h]$, whence the extension $(K(a)|K,\overline{v})$ is unibranched by \cite[Lemma 2.1]{BlaszczokKuhlmann2017}. The result now follows from Corollary \ref{Coro res = w unibranched}.
\end{proof}


\section{Proof of Theorem \ref{Thm restriction and regularity}}

Let \( (K(X)|K,w) \) be an \textit{arbitrary} extension of valued fields. We fix a common extension \( \overline{w} \) of \(w\) and \(\overline{v}\) to \(\overline{K}(X)\). By \cite[Theorem 1.1]{NovacoskiSpivakovsky2018}, fix a complete sequence of key polynomials \( \left\{Q_i\right\}_{i\in\L} \) satisfying the assumptions of Remark \ref{Rmk CSKP}. Furthermore, we can assume that every non-limit key polynomial satisfies the conclusions of Remark \ref{Rmk regular non-limit key pols}. For all \(i\in\L\), we fix the following notations to be used throughout the rest of the section:
\[  w_i:= w_{Q_i}, \, \d_i:= \d_w(Q_i).  \]

\pars The following result is an extension of \cite[Proposition 6.5]{MahboubMansourSpivakovsky2021} to allow for limit key polynomials. 

\begin{Proposition}\label{Prop MMS}
	Let \( i\in \L \) be non-maximal and \(i' \sim i\). Take a root \( a\in \mathcal{R}(Q_{i'}) \). If \( i' = i+1 \) or \( i' \equiv i \), assume that there exists \( b \in \mathcal{R}(Q_i) \) such that 
	\[ \overline{v}_{b,\d_i}|_{K(X)} = w_i \text{ and } \overline{v}(a-b) \geq \d_i. \]
	If \( i' = j+1 \) for some \(j\in I\) where \(i\in\vartheta_j\), assume that there exists \( i_0 \in \vartheta_j \) such that for all \( k\in\vartheta_j \) with \(k\geq i_0\), there exist roots \( b_k \in \mathcal{R}(Q_k) \) satisfying 
	\[ \overline{v}_{b_k,\d_k}|_{K(X)} = w_k \text{ and } \overline{v}(a-b_k) \geq \d_k. \]
	Then,
	\[ \overline{v}_{a,\d_{i'}}|_{K(X)} = w_{i'}. \]
\end{Proposition}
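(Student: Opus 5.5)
The plan is to compare the valuation \(\mu:=\overline{v}_{a,\d_{i'}}|_{K(X)}\) with \(w_{i'}\) through \(Q_{i'}\)-expansions.

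\textbf{Reduction.} Since \(Q_{i'}\) is a key polynomial for \((K(X)|K,w)\), Theorem \ref{Thm key pols min pairs} provides a root \(a^*\in\mathcal{R}(Q_{i'})\) with \(\overline{w}(X-a^*)=\d_{i'}\). Moreover, \((a^*,\d_{i'})\) is a minimal pair of definition for \(w_{i'}\), so \(w_{i'}=\overline{v}_{a^*,\d_{i'}}|_{K(X)}\). By Theorem \ref{Thm w = v_Q}, \(w_{i'}\) is determined by two pieces of data: its values on polynomials of degree \(<\deg Q_{i'}\), and the value \(w_{i'}Q_{i'}=wQ_{i'}\). It therefore suffices to show the following three things:
\begin{enumerate}
\item \((a,\d_{i'})\) is also a minimal pair of definition for \(\mu\);
\item \(\mu f=wf\) for every \(f\in K[X]\) with \(\deg f<\deg Q_{i'}\);
\item \(\mu Q_{i'}=wQ_{i'}\).
\end{enumerate}
Given these, Theorem \ref{Thm w = v_Q} applied to \(\mu\) and to \(w_{i'}\) gives \(\mu f=\min_t\{\mu f_t+t\,\mu Q_{i'}\}=\min_t\{wf_t+t\,wQ_{i'}\}=w_{i'}f\).

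\textbf{Low-degree polynomials.} First take the case \(i'=i+1\) or \(i'\equiv i\), and let \(f\) have \(\deg f<\deg Q_{i'}\). Since the sequence is complete and \(Q_{i'}\) is a key polynomial, we have \(\d_w(f)\le\d_i\); in the case \(i'\equiv i\) we use \(\deg f<\deg Q_i\) instead. Proposition \ref{Prop wf = w_Q f} then gives \(wf=w_if\). The hypothesis gives \(w_if=\overline{v}_{b,\d_i}(f)\), and Proposition \ref{Prop pair of defn} gives \(\overline{v}_{b,\d_i}=\overline{v}_{a,\d_i}\), so \(wf=\overline{v}_{a,\d_i}(f)\).

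To pass from \(\d_i\) to \(\d_{i'}\), I need that no root \(z\) of \(f\) satisfies \(\overline{v}(a-z)\ge\d_i\). Equivalently, I need \(j_{w_i}(f)=0\), computed with the pair \((a,\d_i)\), which is legitimate by the independence of \(j\) on the chosen pair. By Theorem \ref{Thm j(f)/j(Q)}, \(j_{w_i}(f)/j_{w_i}(Q_i)\) is the largest \(t\) with \(w_if=w_i(f_tQ_i^t)\) in the \(Q_i\)-expansion of \(f\).
\begin{itemize}
\item If \(\deg f<\deg Q_i\), this index is \(0\) trivially.
\item If \(i'=i+1\), the index is forced to be small using Remark \ref{Rmk regular non-limit key pols}. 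Writing \(Q_{i+1}=Q_i^n+\dotsb\) with \(w_iQ_{i+1}=n\,wQ_i\), we get \(j_{w_i}(Q_{i+1})=n\,j_{w_i}(Q_i)\). Comparing the roots of \(f\) and of \(Q_{i+1}\) lying in the disc \(\overline{v}(X-a)\ge\d_i\) should then show that a root of \(f\) in that disc would produce \(\d_w(f)\ge\d_{i+1}\), which is impossible.
\end{itemize}
Consequently every root \(z\) of \(f\) has \(\overline{v}(a-z)<\d_i<\d_{i'}\), and hence \(\mu f=\overline{v}f(a)=wf\).

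In the limit case \(i'=j+1\), I run the same argument with each \(b_k\) for \(k\ge i_0\), where the \(\d_k\) are cofinal below the relevant values for polynomials of degree \(<\deg Q_{j+1}\). This yields \(\mu f=\overline{v}f(a)=w f\) as well. The same distance bound shows that no \(z\) with \(\deg_K z<\deg Q_{i'}\) lies within \(\d_{i'}\) of \(a\), which is exactly the minimality claimed in step (1).

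\textbf{Value of \(Q_{i'}\); main obstacle.} The hardest step is (3). We have \(\mu Q_{i'}=\sum_{z\in\mathcal{R}(Q_{i'})}\min\{\d_{i'},\overline{v}(a-z)\}\), and \(wQ_{i'}\) is the analogous sum centred at \(a^*\). The two distance multisets need not coincide a priori, because \(a\) and \(a^*\) are only \(K\)-conjugate. I would try to show that both sums agree with \(\overline{v}_{a,\d_i}(Q_{i'})\) plus a correction that depends only on the \(j\)-invariant. On one side, the regularity form \(Q_{i'}=Q_i^n+\dotsb\) with \(w_i Q_{i'}=n\,wQ_i\) (or the limit version in Definition \ref{Defn regular lim KP} for every \(k\ge i_0\)) shows that exactly \(n\,j_{w_i}(Q_i)\) roots of \(Q_{i'}\) lie within \(\d_i\) of \(a\). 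This is the same count as around \(a^*\), by the independence of \(j_{w_i}\) and Theorem \ref{Thm S_f and j_w(f)}. The roots outside the disc contribute \(\overline{v}(a-z)\), and their total is \(\overline{v}_{a,\d_i}\)-determined, hence equal to the corresponding total for \(a^*\) because \(\overline{v}_{a,\d_i}|_{K(X)}=w_i=\overline{v}_{a^*,\d_i}|_{K(X)}\).

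It remains to handle the roots inside the disc. If this disc-by-disc comparison does not close, my fallback is to show \(\mu Q_{i'}\ge wQ_{i'}\) and \(\le\) separately, using \(\mu\ge w_{i'}\) on \(Q_i\)-expansions, and to argue by symmetry in the roles of \(a\) and \(a^*\).
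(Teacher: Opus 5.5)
Your reduction to (1)--(3) is sound, but (3) is never proved. The disc count says nothing about the contributions \(\min\{\delta_{i'},\overline{v}(a-z)\}\) of the roots inside the disc. The ``symmetry'' fallback is not an argument, since the hypotheses concern \(a\) and not \(a^*\). In the limit case you also use the regular form of Definition \ref{Defn regular lim KP}, which is not a hypothesis of the proposition. The missing idea is that (1) and (3) are unnecessary. It suffices to prove
\[ \overline{v}g(a)=wg \quad\text{for all } g\in K[X] \text{ with } \deg g<\deg Q_{i'}, \]
because \(wg=w_{i'}g=\overline{v}g(a^*)\) for such \(g\) by Theorem \ref{Thm w = v_Q} and Proposition \ref{Prop wf = w_Q f}; the paper then concludes with Theorem \ref{Thm common extns APZ}. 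One can also finish directly. Reducing modulo \(Q_{i'}\) gives \(\overline{v}g(a)=\overline{v}g(a^*)\) for every \(g\in K[X]\). The Taylor expansion with Hasse derivatives, \(\overline{v}_{c,\delta}(f)=\min_t\{\overline{v}\,(\partial_t f)(c)+t\delta\}\) for \(c\in\{a,a^*\}\), then yields \(\overline{v}_{a,\delta_{i'}}|_{K(X)}=\overline{v}_{a^*,\delta_{i'}}|_{K(X)}=w_{i'}\). No minimality of \((a,\delta_{i'})\) is needed.

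Your argument for this reduced statement is correct when \(i'\equiv i\), but it breaks when \(i'=i+1\). There you claim that no root of any \(f\) with \(\deg f<\deg Q_{i+1}\) lies within \(\delta_i\) of \(a\), i.e.\ \(j_{w_i}(f)=0\). This is false for \(f=Q_i\): the hypothesised root \(b\) satisfies \(\overline{v}(a-b)\ge\delta_i\), and \(j_{w_i}(Q_i)\ge 1\). Your step (1) inherits this error. What is true is \(\overline{v}(a-z)\le\delta_i\), but since \(wf=w_if=\overline{v}_{a,\delta_i}(f)\), that is equivalent to (2) itself, and ``comparing roots in the disc'' gives no proof of it.

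The paper obtains (2) without any distance bound. Since \((a,\delta_k)\) is a pair of definition for \(w_k\), we have \(wg=w_kg\le\overline{v}g(a)\). Take a counterexample \(g\) of minimal degree and divide: \(Q_{i'}=hg+r\). This gives \(wr=\overline{v}r(a)>w(hg)\), hence \(wQ_{i'}=w(hg)=w_kQ_{i'}\). That contradicts Proposition \ref{Prop wf = w_Q f}, because \(\delta_{i'}>\delta_k\). The same argument with \(k=i\) handles \(i'=i+1\).

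In the limit case your cofinality remark can be made rigorous another way. Choose \(k\ge i_0\) in \(\vartheta_j\) with \(\delta_w(f)<\delta_k\). Computing \(j_{w_k}(f)\) at the root \(b_k^*\) of \(Q_k\) with \(\overline{w}(X-b_k^*)=\delta_k\) gives \(j_{w_k}(f)=0\). So no root of \(f\) lies within \(\delta_k\) of \(b_k\), hence none within \(\delta_k\) of \(a\), and \(\overline{v}f(a)=\overline{v}_{a,\delta_k}(f)=w_kf=wf\). As written, though, you reach this only by pointing back to the flawed \(i+1\) argument.
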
 

\begin{proof}
	The first assertion has been proved in \cite[Proposition 6.5]{MahboubMansourSpivakovsky2021}. We now consider the remaining case where \(Q_{j+1}\) is a limit key polynomial. By Theorem \ref{Thm key pols min pairs}, there exists a root \( a' \in \mathcal{R}(Q_{j+1}) \) such that \( (a', \d_{j+1}) \) is a minimal pair of definition for \( (K(X)|K,w_{j+1}) \). In light of Theorem \ref{Thm common extns APZ}, it now suffices to show that
	\begin{equation*}
		\overline{v} g(a') = \overline{v} g(a) \text{ for all } g\in K[X] \text{ with } \deg g < \deg Q_{j+1}. 
	\end{equation*}
	Observe that \( \overline{v} g(a') = w_{j+1}g \) by Theorem \ref{Thm w = v_Q}. Moreover, since \(Q_{j+1}\) is a key polynomial and \( \deg g < \deg Q_{j+1} \), we have \( \d_w(g) < \d_{j+1} \). As a consequence, Proposition \ref{Prop wf = w_Q f} yields \( w_{j+1}g = wg \). Therefore the above expression can be rewritten as 
	\begin{equation}\label{eqn wg = vg(a)}
		wg = \overline{v} g(a) \text{ for all } g\in K[X] \text{ with } \deg g < \deg Q_{j+1}.
	\end{equation} 
	\pars Take some polynomial \( g\in K[X] \). By the condition (CSKP3), we can choose \(k\in\vartheta_j\) such that \( \d_w(g) \leq \d_k \). Hence \( wg = w_k g \) by Proposition \ref{Prop wf = w_Q f}. From the given conditions, we can further assume that \( \overline{v}(a-b_k) \geq \d_k \), whereby \( (a,\d_k) \) is a pair of definition for \(w_k\) by Proposition \ref{Prop pair of defn}. Thus
	\[  \overline{v}_{a,\d_k} (X-z) \leq \overline{v}(a-z) \text{ for all } z\in\overline{K}. \]
	It follows that
	\[ w_k f \leq \overline{v} f(a) \text{ for all } f(X)\in K[X]. \]
	In particular, we obtain
	\[ wg = w_k g \leq \overline{v} g(a). \]
	Suppose that the above inequality is strict. Furthermore, assume that \( \deg g \) is minimal with respect to this property. Consider the Euclidean division
	\[ Q_{j+1} = hg + r, \text{ where } \deg r < \deg g. \]
	The minimality of \(\deg g\)  implies that
	\[ wr = \overline{v} r(a). \]
	Since \(a\in \mathcal{R}(Q_{j+1})\), we have \( h(a)g(a) + r(a)= 0 \). Consequently, we obtain
	\[ wr = \overline{v} r(a) = \overline{v} (h(a) g(a)) > wh + wg = w (hg),  \]
	where the inequality follows from the assumption on \(g\). The triangle inequality now yields
	\[  w Q_{j+1} = w(hg).  \]
	Since \( \deg h, \, \deg g, \, \deg r < \deg Q_{j+1} \), we can further assume by condition (CSKP3) that
	\[ \d_w (hg) = \max \left\{  \d_w(h), \d_w(g)  \right\}  \leq \d_k \text{ and } \d_w(r) \leq \d_k. \]
	Hence Proposition (\ref{Prop wf = w_Q f}) yields 
	\[ w_k (hg) = w (hg) \text{ and } w_k r = wr. \]
	We have thus arrived at
	\[ w Q_{j+1} = w_k Q_{j+1}.  \]
	Since \( \d_{j+1} > \d_k \), this contradicts Proposition \ref{Prop wf = w_Q f}. Thus (\ref{eqn wg = vg(a)}) is satisfied for all polynomials \(g\in K[X]\) with \(\deg g < \deg Q_{j+1}\).
\end{proof}

\parm We are now well-posed to provide a \textit{proof of Theorem \ref{Thm restriction and regularity}}. Recall the notations of Theorem \ref{Thm S_f and j_w(f)}.

\begin{proof}
	For each \( i\in\L \), denote by \(t_i\) the number of common extensions of \(w_i\) and \(\overline{v}\) to \(\overline{K}(X)\). Observe that \(t_i\) is a finite number by Proposition \ref{Prop finitely many common extns}. 
	
	\pars We first prove the direction \( (i) \Longrightarrow (ii) \) by transfinite induction on \(i\). The assertion is vacuously true for \(i = 0\). Assume that the claim holds for some non-maximal element \(i\in\L\). We will show that it holds for all \(i'>i\) with \(i' \sim i\). First assume that either \(i' = i+1\) or \(i'\equiv i\). Consider the \(Q_i\)-expansion 
	\[ Q_{i'} = Q_i^n + \dotsc + q_1 Q_i + q_0. \]
	Since \(\{ Q_i \}_{i\in\L}\) is regular, employing Theorem \ref{Thm j(f)/j(Q)} we obtain 
	\[ j_{w_i} (Q_{i'}) = n j_{w_i} (Q_i). \]
	We have assumed that 
	\[ \overline{v}_{b,\d_i}|_{K(X)} = w_i \text{ for all } b\in \mathcal{R}(Q_i).  \]
	In other words, 
	\[ \left| \mathcal{S}_{w_i} (Q_i) \right| = \deg Q_i.  \]
	Applying Theorem \ref{Thm S_f and j_w(f)}, we have
	\begin{align*}
		t_i \, j_{w_i} (Q_i) &= \deg Q_i,\\
		\left| \mathcal{S}_{w_i} (Q_{i'}) \right| = t_i \, j_{w_i} (Q_{i'}) &= n \, t_i \, j_{w_i} (Q_i) = n \deg Q_i = \deg Q_{i'}.
	\end{align*}
	In other words, for any \( a\in\mathcal{R}(Q_{i'}) \), we have
	\[ \overline{v}_{a,\d_i}|_{K(X)} = w_i.  \]
	By Theorem \ref{Thm key pols min pairs}, there exists \( b\in\mathcal{R}(Q_i) \) such that \( \overline{v}_{a,\d_i} = \overline{v}_{b,\d_i} \). As a consequence,
	\begin{equation}\label{eqn *}
		\overline{v}_{b,\d_i}|_{K(X)} = w_i \text{ and } \overline{v}(a-b) \geq \d_i.
	\end{equation}
	Then
	\[  \overline{v}_{a,\d_{i'}}|_{K(X)} = w_{i'}  \]
	by Proposition \ref{Prop MMS}.
	
	\pars We now assume that \(i' = j+1\) for some \(j\in I\) with \(i\in\vartheta_j\). Since \(\{Q_i\}_{i\in\L}\) is regular, there exists some \(i_0 \in \vartheta_j\) such that for all \( k \geq i_0 \) with \(k\in\vartheta_j\), the \(Q_k\)-expansion
	\[ Q_{j+1} = q_n Q_k^n + \dotsc + q_1 Q_k + q_0  \]
	satisfies 
	\[ q_n = 1 \text{ and } w_k (Q_{j+1}) = n w Q_k.   \]
	Hence 
	\[ j_{w_k} (Q_{j+1}) = n j_{w_k} (Q_k)  \]
	by Theorem \ref{Thm j(f)/j(Q)}. Moreover, since \( k\equiv i \), we obtain from the first part of the induction argument that the assertion holds true for \(k\). Hence,
	\[ t_k \, j_{w_k} (Q_k) = \left| \mathcal{S}_{w_k} (Q_k) \right| = \deg Q_k.    \]
	It now follows from Theorem \ref{Thm S_f and j_w(f)} that 
	\[ \left| \mathcal{S}_{w_k} (Q_{j+1}) \right| = t_k \, j_{w_k} (Q_{j+1}) = t_k \, n j_{w_k} (Q_k) = n \deg Q_k = \deg Q_{j+1}.   \]
	Analogous to (\ref{eqn *}), we deduce that 
	\[ \overline{v}_{b_k, \d_k}|_{K(X)} = w_k \text{ and } \overline{v}(a-b_k) \geq \d_k. \]
	Since this holds for all \( k\in\vartheta_j \) with \(k\geq i_0\), the hypotheses of Proposition \ref{Prop MMS} are satisfied, and hence
	\[ \overline{v}_{a,\d_{j+1}}|_{K(X)} = w_{j+1}.  \]
	 
	 \parm We now prove the direction \( (ii) \Longrightarrow (i) \). By Remark \ref{Rmk regular non-limit key pols}, it suffices to show that every limit key polynomial is regular. Take \( j\in I \) such that \(Q_{j+1}\) is a limit key polynomial. Suppose that \(Q_{j+1}\) is not regular. Then there exists \(i\in\vartheta_j\) such that in the \(Q_i\)-expansion 
	 \[ Q_{j+1} = q_n Q_i^n + \dotsc + q_1 Q_i + q_0, \]
	 either \(q_n \neq 1\) or \( w_i(Q_{j+1}) < n w Q_i \). Since \(Q_i\) and \(Q_{j+1}\) are monic, the criterion \(q_n\neq 1\) forces \(q_n\) to be a monic polynomial of positive degree. Consequently,
	 \[ n \deg Q_i < \deg Q_{j+1}.  \]
	 If \( q_n = 1 \) and \( w_i(Q_{j+1}) < n w Q_i \), then Theorem \ref{Thm j(f)/j(Q)} would yield that
	 \[ j_{w_i} (Q_{j+1}) < n j_{w_i} (Q_i).  \]
	 By assumption, every root \(b\in\mathcal{R}(Q_i)\) satisfies 
	 \[ \overline{v}_{b,\d_i}|_{K(X)} = w_i. \]
	 Hence
	 \[ \left| \mathcal{S}_{w_i} (Q_i) \right| = \deg Q_i.  \]
	 Therefore, employing Theorem \ref{Thm S_f and j_w(f)} we obtain
	 \[ \left| \mathcal{S}_{w_i} (Q_{j+1}) \right| = t_i \, j_{w_i}(Q_{j+1}) \leq t_i \, n j_{w_i} (Q_i) = n \deg Q_i \leq \deg Q_{j+1},  \]
	 with at least one of the inequalities strict. It follows that \( \left| \mathcal{S}_{w_i} (Q_{j+1}) \right| < \deg Q_{j+1} \). Consequently, there exists a root
	 \[ a \in \mathcal{R}(Q_{j+1}) \setminus \mathcal{S}_{w_i}(Q_{j+1}).  \]
	 Thus
	 \[ \overline{v}_{a,\d_i}|_{K(X)} \neq w_i. \]
	 Since \( \left| \mathcal{S}_{w_i} (Q_i) \right| = \deg Q_i \), this implies that
	 \[ \overline{v}_{a,\d_i} \neq \overline{v}_{b,\d_i} \text{ for all } b\in\mathcal{R}(Q_i).  \]
	 Applying Proposition \ref{Prop pair of defn}, we obtain 
	 \[ \overline{v}(a-b) < \d_i \text{ for all } b\in\mathcal{R}(Q_i).  \]
	 \cite[Lemma 6.2]{MahboubMansourSpivakovsky2021} then yields that
	 \[ \overline{v} Q_i (a) < w Q_i.  \] 
	 On the other hand, we have
	 \[ \overline{v}_{a,\d_{j+1}}|_{K(X)} = w_{j+1} \]
	 from the given conditions. It follows from Theorem \ref{Thm key pols min pairs} that \( (a,\d_{j+1}) \) is a minimal pair of definition for \( (K(X)|K,w_{j+1}) \). Since \( \deg Q_i < \deg Q_{j+1} \), Theorem \ref{Thm w = v_Q} yields that
	 \[ w_{j+1} Q_i = \overline{v} Q_i (a). \]
	 Finally, since \( w Q_i = w_{j+1} Q_i \) by Proposition \ref{Prop wf = w_Q f}, we conclude that
	 \[ w Q_i = \overline{v} Q_i(a),  \]
	 contradicting our earlier observation. Thus \(Q_{j+1}\) is a regular limit key polynomial. We have thus proved the theorem.
\end{proof}

\pars We then obtain the following as an immediate consequence of Corollary \ref{Coro res = w dense in K^h}:

\begin{Lemma}
	Assume that \((K,v)\) is dense in the henselization \(K^h\). Then every arbitrary extension \( (K(X)|K,w) \) admits a regular complete sequence of key polynomials.
\end{Lemma}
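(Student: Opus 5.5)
The plan is to deduce the lemma from Theorem \ref{Thm restriction and regularity} by checking condition (ii) for a complete sequence of key polynomials. By \cite[Theorem 1.1]{NovacoskiSpivakovsky2018} and Remark \ref{Rmk CSKP}, fix a complete sequence \(\{Q_i\}_{i\in\L}\) for \((K(X)|K,w)\) in the normalized form of Remark \ref{Rmk CSKP}. As at the start of Section 4, we may also assume that every non-limit key polynomial satisfies the conclusions of Remark \ref{Rmk regular non-limit key pols}. By the direction (ii)\(\Rightarrow\)(i) of Theorem \ref{Thm restriction and regularity}, it then suffices to show that for every \(i\in\L\) and every root \(a\) of \(Q_i\) we have \(\overline{v}_{a,\d_i}|_{K(X)} = w_i\).

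Fix \(i\in\L\). Since \(Q_i\) is a key polynomial for \((K(X)|K,w)\), Theorem \ref{Thm key pols min pairs} gives a root \(a_0\in\mathcal{R}(Q_i)\) such that \((a_0,\d_i)\) is a minimal pair of definition for \((K(X)|K,w_i)\). In particular, \(w_i\) is a valuation transcendental extension of \(v\) to \(K(X)\), and \(\overline{v}_{a_0,\d_i}|_{K(X)} = w_i\). Every other root \(a\) of \(Q_i\) is a \(K\)-conjugate of \(a_0\), because \(Q_i\) is irreducible. We now apply Corollary \ref{Coro res = w dense in K^h} to the valuation transcendental extension \((K(X)|K,w_i)\) with the minimal pair \((a_0,\d_i)\). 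Since \((K,v)\) is dense in \(K^h\), the corollary gives \(\overline{v}_{a,\d_i}|_{K(X)} = w_i\) for all \(K\)-conjugates \(a\) of \(a_0\), which is exactly the set \(\mathcal{R}(Q_i)\).

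The one point to watch is that Corollary \ref{Coro res = w dense in K^h} and Theorem \ref{Thm restriction and IC_K(w) and K^h} are stated for a fixed valuation transcendental \(w\). Here they must be applied to the truncation \(w_i\) rather than to \(w\), which may be valuation algebraic. This is legitimate because the corollary depends only on \((K,v)\) being dense in \(K^h\) and on the chosen minimal pair, and Theorem \ref{Thm key pols min pairs} guarantees that \(w_i\) has the minimal pair \((a_0,\d_i)\). Equivalently, by Corollary \ref{Coro res = w dense in K^h} the extension \((K(a_0)|K,\overline{v})\) is unibranched, so Corollary \ref{Coro res = w unibranched} also applies.

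With condition (ii) established for every index, Theorem \ref{Thm restriction and regularity} shows that \(\{Q_i\}_{i\in\L}\) is regular, which proves the lemma. The argument in fact shows that every normalized complete sequence is regular.
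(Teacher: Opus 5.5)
Your proof is correct and follows the same route as the paper, which calls the lemma an immediate consequence of Corollary \ref{Coro res = w dense in K^h}. For each index, Theorem \ref{Thm key pols min pairs} makes $w_i$ valuation transcendental with minimal pair $(a_0,\d_i)$, the corollary then gives condition (ii) of Theorem \ref{Thm restriction and regularity} for every root of $Q_i$, and the direction (ii)$\Rightarrow$(i) yields regularity. One small imprecision, which you do not need: unibranchedness of $(K(a_0)|K,\overline{v})$ comes from the proof of Corollary \ref{Coro res = w dense in K^h}, not from its statement.
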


\bibliographystyle{alpha}
\bibliography{references}

\end{document}